\documentclass[11pt,twoside]{amsart}

\usepackage{amsmath}
\usepackage{amsthm}
\usepackage{amsfonts, amssymb}
\usepackage[all]{xy}
\usepackage{url}

\setlength{\textwidth}{15cm}
\setlength{\topmargin}{0cm}
\setlength{\oddsidemargin}{.5cm}
\setlength{\evensidemargin}{.5cm}
\setlength{\textheight}{21.5cm}

\usepackage{latexsym}

\usepackage{graphicx}
\usepackage{graphics}
\usepackage[hang,footnotesize,bf]{caption}
\usepackage{float}
\usepackage{enumerate}
\usepackage{verbatim}

\theoremstyle{plain}
\newtheorem{lema}{Lemma}[section]
\newtheorem{prop}[lema]{Proposition}
\newtheorem{teo}[lema]{Theorem}

\newtheorem{coro}[lema]{Corollary}
\theoremstyle{definition}
\newtheorem{defi}[lema]{Definition}

\newtheorem{obs}[lema]{Remark}

\newcommand{\bd}{\partial}

\pagestyle{myheadings} \markboth{{\sc  n.a. capitelli, e.g.
minian}}{{\sc a generalization of a result of Dong and Santos-Sturmfels}}

\begin{document}

\title[A generalization of a result of Dong and Santos-Sturmfels]{A generalization of a result of Dong and Santos-Sturmfels on the Alexander dual of spheres and balls}

\author[N.A. Capitelli]{Nicolas Ariel Capitelli}
\author[E.G. Minian]{Elias Gabriel Minian}

\address{Departamento  de Matem\'atica-IMAS\\
 FCEyN, Universidad de Buenos Aires\\ Buenos
Aires, Argentina}

\email{ncapitel@dm.uba.ar} \email{gminian@dm.uba.ar}

\begin{abstract} We prove a generalization of a result by Dong and Santos-Sturmfels about the homotopy type of the Alexander dual of balls and spheres. Our results involve $NH$-manifolds, which were recently introduced as the non-homogeneous (or non-pure) counterpart of classical polyhedral manifolds. We show that the Alexander dual of an $NH$-ball is contractible and the Alexander dual of an $NH$-sphere is homotopy equivalent to a sphere. We also prove that $NH$-balls and $NH$-spheres arise naturally as the double duals of standard balls and spheres. \end{abstract}

\subjclass[2010]{55M05, 52B70, 57N65, 57Q99}

\keywords{Simplicial complexes, combinatorial manifolds, Alexander dual.}

\maketitle

\section{Introduction}

Let $K$ be a finite simplicial complex. Fix a ground set of vertices $V$ which contains the set $V_K$ of vertices of $K$, and let $\Delta$ denote the simplex spanned by $V$. The classical Alexander duality theorem admits a  combinatorial formulation in terms of a simplicial homotopy representative $K^{*_V}$ of $|\partial\Delta|-|K|$ called the Alexander dual of $K$. In this form the theorem asserts that $H_i(K)\simeq H^{n-i-3}(K^{*_V})$, where $n$ is the cardinal of $V$ and both homology and cohomology groups are reduced (see \cite[Theorem 71.1]{Mu} and \cite{BjTa}). In light of this result, it is natural to ask if the homotopy type of $K$ can also be deduced from $K^{*_V}$. Unfortunately this is generally not the case. There are canonical examples of contractible polyhedra and homotopy spheres whose Alexander duals are not respectively contractible or homotopy equivalent to spheres. Moreover, it can be shown that for any finitely presented group $G$ there is a finite simply connected complex $K$ such that $\pi_1(K^{*_V})=G$ (see \cite{MR}). In 2002, Dong \cite{Dong} proved that the Alexander dual of a simplicial sphere has again the homotopy type of a sphere. One year later Santos and Sturmfels \cite{SS} showed that the Alexander duals of simplicial balls are contractible spaces. Dong's approach relies mainly on convexity and Santos-Sturmfels' proof uses Dong's result on spheres. Both results evidence that a locally well-behaved structure on the complex forces homotopy stableness on its dual and one may ask whether other manifold-like constructions can hold similar properties.

The $NH$-manifolds are natural candidates for this. These complexes were recently introduced in \cite{CM} as a generalization of combinatorial manifolds to the non-ho\-mo\-ge\-neous setting (or, more precisely, to the non-necessarily homogeneous setting). The study of $NH$-manifolds was in part motivated by the theory of non-pure shellability due to Bj\"orner and Wachs \cite{BjWa}. It was also shown in \cite{CM} that they appear when investigating  Pachner moves between manifolds: if two polyhedral manifolds (with or without boundary) are PL-homeomorphic then they are related by a finite sequence of factorizations involving $NH$-manifolds. 

In this paper we prove a generalization of the results of Dong and Santos-Sturmfels to non-necessarily homogeneous balls and spheres. Concretely, we show that the Alexander dual of an $NH$-ball is a contractible space and the Alexander dual of an $NH$-sphere is homotopy equivalent to a sphere (Theorems \ref{teo main for NH-balls} and \ref{teorema esferas completo} below). These results extend considerably the previous ones. Our approach is based on the local structure of theses polyhedra and, as a by-product, we exhibit an alternative and simple proof of the original results which relies on the local nature of the manifolds, in contrast to the previous treatments.

The second aim of this article is to use the theory of $NH$-manifolds to characterize the (topological, simplicial) structure of the Alexander duals of balls and spheres in the following sense. Given a subspace $A$ of the $d$-sphere $S^d$,  since the complement $B=S^d-A$ is also a subspace of $S^{d'}$ for any $d'\geq d$, it is natural to study the relationship between $A$ and $S^{d'}-B$ (the complement of its complement in a sphere of higher dimension). In the combinatorial setting, this amounts to understand the \emph{double dual} $L=(K^{*_V})^{*_{V'}}$ where $V_K\subseteq V\subsetneq V'$.  We prove that the double duals of balls (resp. spheres) are $NH$-balls (resp. $NH$-spheres). 

The rest of the paper is organized as follows. In section two we recall the basic properties of classical combinatorial manifolds and $NH$-manifolds and prove a result on the existence of spines for $NH$-manifolds with boundary. This result is used in the proofs of the main theorems but it is also interesting in its own right.

In section three we compare the Alexander duals of a complex with respect to different ground sets of vertices and characterize the Alexander double duals of balls and spheres. As a corollary we show that $NH$-balls and $NH$-spheres appear as double duals of (classical) balls and spheres.

In the last section of the article we prove the generalization of Dong's and Santos-Sturmfels' results on the Alexander dual of spheres and balls to the non-homogeneous setting.

\section{Preliminaries}\label{Sec: Peliminaries}

\subsection{Basic notations} All complexes considered in this paper are finite. Given a set of vertices $V$, $|V|$ will denote its cardinality and $\Delta(V)$ the simplex spanned by the vertices of $V$. $\Delta^d=\Delta(\{0,\ldots,d\})$ will denote a generic $d$-simplex and $\partial
\Delta^d$ its boundary. The set of vertices of a complex $K$ will be denoted $V_K$ and we set $\Delta_K=\Delta(V_K)$. We write $\sigma<\tau$ when $\sigma$ is a face of $\tau$ and $\sigma\prec\tau$ when it is an immediate face. A simplex is \emph{maximal} or \emph{principal} in a complex $K$ if it is not a proper face of any other simplex of $K$. A \emph{ridge} of $K$ is an immediate face of a principal simplex. Two simplices $\sigma,\tau\in K$ are \emph{adjacent} if $\sigma\cap\tau$ is an immediate face of $\sigma$ or $\tau$.

As usual $K\ast L$ will denote the join of the complexes $K$ and $L$. By convention, if $\emptyset$ is the empty simplex and $\{\emptyset\}$ the complex containing only the empty simplex then $K\ast\{\emptyset\}=K$ and $K\ast\emptyset=\emptyset$. For $\sigma\in K$,  $lk(\sigma,K)=\{\tau\in K:\ \tau\cap\sigma=\emptyset,\ \tau\ast\sigma\in K\}$ denotes its \emph{link} and $st(\sigma,K)=\sigma\ast lk(\sigma,K)$ its (closed) \emph{star}. The union of two complexes $K, L$ will be denoted by $K+L$. A subcomplex $L\subset K$ is said to be \emph{top generated} if every principal simplex of $L$ is also principal in $K$.

We write $K\simeq_{PL}L$ when $K$ and $L$ are PL-homeomorphic; that is, whenever they have a subdivision in common. We shall frequently identify a complex $K$ with its geometric realization and we shall write $K\simeq L$ when $K$ is homotopy equivalent to $L$.

Given $t\geq 0$, $\Sigma^t K=\partial\Delta^t\ast K$ will denote the simplicial $t$-fold (unreduced) suspension of $K$.

A principal simplex $\tau\in K$ is \emph{collapsible} in $K$ if there is a ridge $\sigma\prec\tau$ which is not a face of any other simplex of $K$ (i.e. $\sigma$ is a free face). If $\tau$ is collapsible, the operation which transforms $K$ into $K-\{\tau,\sigma\}$ is called an \emph{elementary (simplicial) collapse} and is denoted by $K\searrow^e K-\{\tau,\sigma\}$. It is easy to see that $K-\{\tau,\sigma\}\subset K$ is a strong deformation retract. The inverse operation is called an \emph{elementary (simplicial) expansion}. If there is a sequence $K\searrow^e K_1\searrow^e\cdots\searrow^e L$ we say that $K$ \emph{collapses} to $L$ (or equivalently, $L$ \emph{expands} to $K$) and write  $K\searrow L$  or $L\nearrow K$. A complex $K$ is \emph{collapsible} if it has a subdivision which collapses to a single vertex.

\subsection{Combinatorial manifolds} We recall some basic definitions and properties of the classical theory of combinatorial manifolds. For a comprehensive exposition of the subject we refer the reader to \cite{Gla, Hud, Lic}.

A combinatorial $d$-ball is a complex which is PL-homeomorphic to $\Delta^d$. A combinatorial $d$-sphere is a complex PL-homeomorphic to $\partial\Delta^{d+1}$. By convention, $\{\emptyset\}=\bd{\Delta}^{0}$ is considered a sphere of dimension $-1$. A combinatorial $d$-manifold is a complex $M$ such that $lk(v,M)$ is a combinatorial ($d-1$)-ball or ($d-1$)-sphere for every $v\in V_M$. It is easy to verify that $d$-manifolds are homogeneous complexes of dimension $d$; that is, all of its principal simplices are $d$-dimensional. The link of any simplex in a manifold is also a ball or a sphere and the class of combinatorial manifolds is closed under PL-homeomorphisms. In particular, combinatorial balls and spheres are combinatorial manifolds. By a result of J.H.C. Whitehead, combinatorial $d$-balls are precisely the collapsible combinatorial $d$-manifolds (see \cite[Corollaries III.6 and III.17]{Gla}) and by a result of Newman, if $S$ is a combinatorial $d$-sphere containing a combinatorial $d$-ball $B$, then the closure $\overline{S-B}$ is a combinatorial $d$-ball (see \cite{Gla, Hud, Lic}).

The boundary  $\bd M$ of a combinatorial $d$-manifold $M$ can be regarded as the set of simplices whose links are combinatorial balls. This coincides with the usual definition of boundary for $d$-homogeneous complexes as the subcomplex generated by the mod $2$ sum of the ($d-1$)-simplices. It is easy to see that $\partial M$ is a ($d-1$)-combinatorial manifold without boundary.

A \emph{weak $d$-pseudomanifold} without boundary is a $d$-homogeneous simplicial complex $P$ satisfying that each ($d-1$)-simplex is contained in exactly two $d$-simplices. It is easy to see that in this case $lk(\sigma,P)$ is a weak ($d-\dim(\sigma)-1$)-pseudomanifold for every $\sigma\in P$ and that $H_d(P;\mathbb{Z}_2)\neq 0$, since the mod 2 sum of the $d$-simplices of $P$ is a generating cycle. A \emph{$d$-pseudomanifold} is a weak $d$-pseudomanifold with or without boundary (i.e. the $(d-1)$-simplices are contained in \emph{at most} two $d$-simplices) which is strongly connected; that is, any two $d$-simplices $\sigma,\tau$ can be connected by a sequence of $d$-simplices $\sigma=\eta_0,\ldots,\eta_k=\tau$ such that $\eta_i\cap\eta_{i+1}$ is ($d-1$)-dimensional for each $i=0,\ldots,k-1$ (i.e. $\eta_i$ and $\eta_{i+1}$ are adjacent). It is easy to see that a connected combinatorial $d$-manifold is a $d$-pseudomanifold.

\subsection{Non-homogeneous manifolds} $NH$-manifolds are the non-homogeneous versions of combinatorial manifolds and play a key role in this work. We give next a brief summary of the subject and refer the reader to \cite{CM} for a more detailed exposition.

$NH$-manifolds have a local structure consisting of Euclidean spaces of varying dimensions. In Figure \ref{fig:ejemplos_nh_variedades} we exhibit some examples of $NH$-manifolds.

\begin{defi} An \emph{$NH$-manifold} (resp. \emph{$NH$-ball}, \emph{$NH$-sphere}) of dimension $0$ is a manifold (resp. ball, sphere) of dimension $0$. An $NH$-sphere of dimension $-1$ is, by convention, the empty set. For $d\geq 1$, we define by induction

 \begin{itemize}
 \item An \emph{$NH$-manifold} of dimension $d$ is a complex $M$ of dimension $d$ such that $lk(v,M)$ is an $NH$-ball of dimension $0\leq k\leq d-1$ or an $NH$-sphere of dimension $-1\leq k\leq d-1$ for all $v\in V_M$.
 \item An \emph{$NH$-ball} of dimension $d$ is a collapsible $NH$-manifold of dimension $d$.
 \item An \emph{$NH$-sphere} of dimension $d$ and \emph{homotopy dimension} $k$ is an $NH$-manifold $S$ of dimension $d$ such that there exist a top generated $NH$-ball $B$ of dimension $d$ and a top generated combinatorial $k$-ball $L$ such that $B + L=S$ and $B\cap L=\bd{L}$. We say that  $S=B+L$ is a \emph{decomposition} of $S$ and write $\dim_h(S)$ for the homotopy dimension of $S$.\end{itemize}\end{defi}

\begin{figure}[h]

\centering

\includegraphics[width=6.00in,height=2.00in]{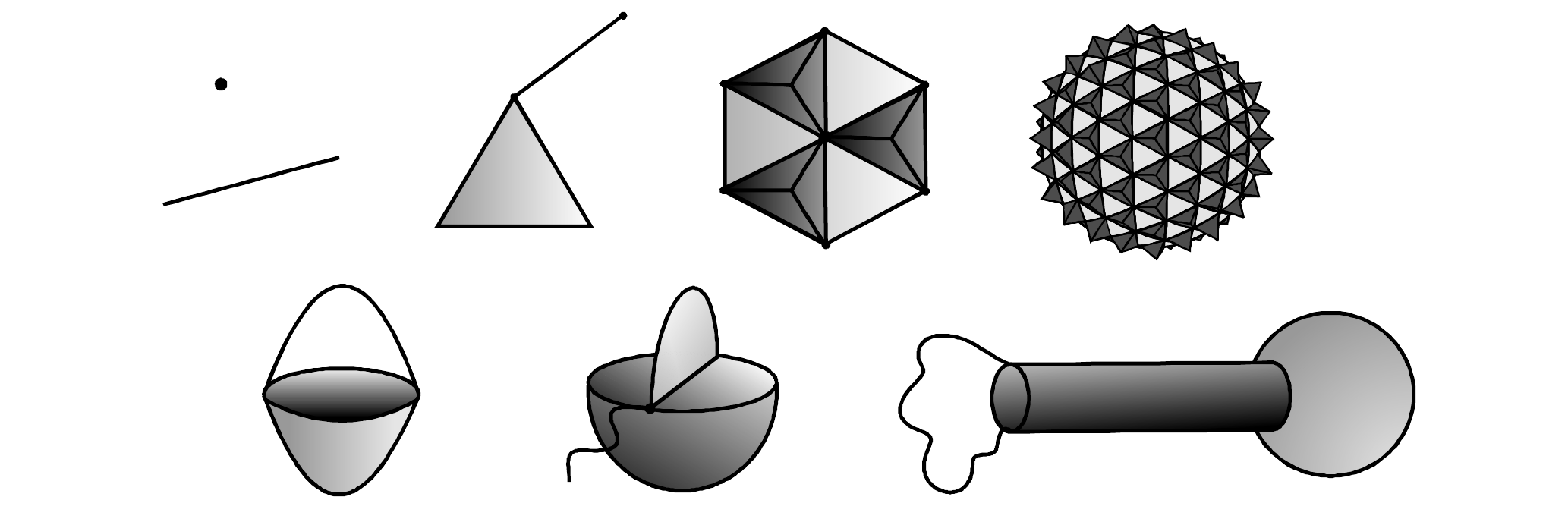}

\caption{Examples of $NH$-manifolds. The first, fourth and fifth figures are $NH$-spheres of dimension $1$, $3$ and $2$ and homotopy dimension $0$, $2$ and $1$ respectively. The second, third and sixth figures are $NH$-balls. The last figure is an $NH$-bouquet.}

\label{fig:ejemplos_nh_variedades}

\end{figure}

 In \cite{CM} it is proved that $NH$-manifolds satisfy many (generalized) results of the classical theory of combinatorial manifolds. Also, by \cite[Theorem 3.6]{CM}, homogeneous $NH$-manifolds are standard combinatorial manifolds. We next summarize the relevant results of this theory that will be used in this article.

\begin{teo} \label{teo recuento nh-variedades} Let $M$ be an $NH$-manifold of dimension $d$, let $\sigma\in M$ and let $B_1,B_2$ be $NH$-balls and $S_1,S_2$ be $NH$-spheres.\begin{enumerate}

\item \label{item link is bola o esfera} $lk(\sigma,M)$ is an $NH$-ball or an $NH$-sphere.

\item \label{item pl close} $NH$-manifolds, $NH$-balls and $NH$-spheres are closed under PL-homeomorphisms.

\item \label{item join de balls and spheres} $B_1\ast B_2$ and $B_1\ast S_2$ are $NH$-balls. $S_1\ast S_2$ is an $NH$-sphere.

\item \label{item nh-pseudo} If $M$ is connected then it is an \emph{$NH$-pseudomanifold}; i.e. ($a$) for each ridge $\sigma\in M$, $lk(\sigma,M)$ is either a point or an $NH$-sphere of homotopy dimension $0$; and ($b$) given any two principal simplices $\sigma,\tau\in M$, there exists a sequence $\sigma=\eta_1,\ldots,\eta_s=\tau$ of principal simplices of $M$ such that $\eta_i$ and $\eta_{i+1}$ are adjacent for each $1\leq i\leq s-1$.

\end{enumerate}

\end{teo}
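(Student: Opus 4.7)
Plan: I would prove (1)--(4) by a simultaneous induction on the dimension $d$, exploiting the fact that every defining condition for an $NH$-manifold, $NH$-ball or $NH$-sphere of dimension $d$ refers only to objects of dimension at most $d-1$. The base case $d=0$ is immediate since the three notions coincide there with their classical counterparts. Throughout I freely use that, by definition, $NH$-balls and $NH$-spheres of dimension $d$ are themselves $NH$-manifolds of dimension $d$.

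For part (1) an auxiliary induction on $\dim(\sigma)$ suffices: for a vertex the conclusion is the very definition of $NH$-manifold, and for higher-dimensional $\sigma$ I would write $\sigma=v\ast\sigma'$ and use the identity $lk(\sigma,M)=lk(v,lk(\sigma',M))$; the inductive hypothesis says that $lk(\sigma',M)$ is an $NH$-ball or an $NH$-sphere, hence an $NH$-manifold, so the definition applies to the link of $v$. For part (2) it suffices to prove that an elementary stellar subdivision at a simplex $\tau$ preserves each of the three properties, since any PL-homeomorphism is realized by a finite sequence of such moves. The only link that genuinely changes is that of the new apex, which equals $\partial\tau\ast lk(\tau,M)$; by part (3) applied in lower dimension this is of the required type. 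Collapsibility and the decomposition $S=B+L$ are transported through subdivisions because stellar subdivision preserves simplicial collapses and restricts to stellar subdivisions on top-generated subcomplexes.

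For part (3) the case $B_1\ast B_2$ follows from the identity $lk(v,B_1\ast B_2)=lk(v,B_1)\ast B_2$ for $v\in V_{B_1}$, which is an $NH$-ball by induction; collapsibility is transported along $B_1\ast B_2\searrow\{v\}\ast B_2$, the cone on $B_2$, which in turn collapses to the apex. The mixed case $B_1\ast S_2$ is analogous. For $S_1\ast S_2$, writing $S_i=B_i'+L_i$ for fixed decompositions, I would propose $S_1\ast S_2=(B_1'\ast S_2\cup L_1\ast B_2')+(L_1\ast L_2)$: the second summand is a combinatorial $(\dim L_1+\dim L_2+1)$-ball, the first is an $NH$-ball by the previous cases of (3), and one verifies that their intersection is exactly $\partial(L_1\ast L_2)$.

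For part (4a) fix a ridge $\sigma$ with immediate principal $\tau$ and set $v=\tau\setminus\sigma$; then $v$ is a principal $0$-simplex of $lk(\sigma,M)$, which by (1) is an $NH$-ball or an $NH$-sphere. An $NH$-ball is contractible, so an isolated principal vertex forces it to equal $\{v\}$; likewise an $NH$-sphere of positive homotopy dimension is connected, so the only remaining option is an $NH$-sphere of homotopy dimension $0$. Part (4b) follows by a standard open--closed argument: define the equivalence relation on principal simplices generated by adjacency and, using (a) at each shared face, show that the union of simplices in any fixed equivalence class is clopen in $|M|$, so connectedness of $M$ forces a single class. The most delicate point of the plan, to my mind, is the explicit decomposition in (3) for $S_1\ast S_2$ and the verification that the gluing locus is exactly $\partial(L_1\ast L_2)$.
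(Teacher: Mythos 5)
First, a point of comparison: the paper does not prove this theorem at all --- it is explicitly a summary of results imported from \cite{CM}, so there is no in-paper argument to measure your plan against. Judged on its own terms, your simultaneous induction is the right framework, and parts (1), (2) and (4a) are essentially fine: for (1) the link-of-a-link identity does the job; for (2) note only that under a starring at $\tau$ the links of \emph{all} vertices of $st(\tau,M)$ change (not just the new apex), though they change by starrings of lower-dimensional links so your induction absorbs them, and that Alexander's theorem produces a zig-zag of starrings \emph{and} inverse starrings, so you must prove the ``if and only if'' version of the inductive step; your (4a) argument via the isolated principal vertex and connectivity of $NH$-balls and of $NH$-spheres of positive homotopy dimension is correct. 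In (4b), ``using (a) at each shared face'' undersells what is needed: openness of an adjacency class at a point interior to a face $\rho$ of codimension $\geq 2$ requires chaining \emph{all} principal simplices of $lk(\rho,M)$, i.e.\ an inductive strong-connectivity statement for links of every dimension, including the disconnected links ($NH$-spheres of homotopy dimension $0$), whose two components are bridged only because one of them is a single vertex, making the corresponding stars adjacent. This is fillable but is real content, not a corollary of (a).

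The genuine gap is in (3) for $S_1\ast S_2$, exactly where you suspected. Your identity $S_1\ast S_2=(B_1'\ast S_2+L_1\ast B_2')+(L_1\ast L_2)$ is correct, and the gluing locus is indeed $\partial(L_1\ast L_2)$; but the assertion that $B:=B_1'\ast S_2+L_1\ast B_2'$ ``is an $NH$-ball by the previous cases of (3)'' does not follow: the previous cases give that each of $B_1'\ast S_2$ and $L_1\ast B_2'$ is an $NH$-ball, not that their union along $\partial L_1\ast B_2'$ is. Two things are missing. First, that $B$ is even an $NH$-manifold: a union of two top-generated $NH$-balls along a common $NH$-ball need not be one, and the only gluing criterion quoted in the paper (Theorem \ref{Alexander for NH-manifolds}) requires one of the two pieces to be a \emph{combinatorial} ball together with a pseudoboundary condition on the interior of the intersection, whereas $L_1\ast B_2'$ is only an $NH$-ball; so either a stronger form of the gluing theorem must be invoked and proved, or the links of vertices of $B$ must be analyzed by hand. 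Second, collapsibility of $B$: Mayer--Vietoris and van Kampen give contractibility, but an $NH$-ball must be collapsible, and no general fact lets you collapse either piece onto the gluing locus (in particular $L_1\ast B_2'\searrow\partial L_1\ast B_2'$ is not available --- it already fails at the level of ``a ball collapses to its boundary'' and, with the cone factor, touches on endocollapsibility issues). This single step carries most of the difficulty of the theorem, so as it stands the proposal does not constitute a proof of (3), and hence not of the theorem.
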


The \emph{pseudoboundary} $\tilde{\partial}M$ of an $NH$-manifold is the set of simplices whose links are $NH$-balls (this is not in general a simplicial complex). The boundary $\partial M$ is the simplicial complex generated by the simplices in $\tilde{\partial}M$. By \cite[Proposition 4.3]{CM}, $\tilde{\partial}M$ is a complex if and only if $M$ is homogeneous. This implies that boundaryless $NH$-manifolds are classical (combinatorial) manifolds. This shows that, unlike classical manifolds, $NH$-spheres which are  non-homogeneous do have boundary. By \cite[Lemma 4.8]{CM} if $S=B+L$ is a decomposition of an $NH$-sphere then $lk(\sigma,S)$ is an $NH$-sphere with decomposition $lk(\sigma,B)+lk(\sigma,L)$ for every $\sigma\in L$.

Given an $NH$-manifold $M$, we denote by $\overset{\circ}{M}$ the relative interior of $M$, which is the set of simplices whose links are $NH$-spheres (of any dimension).

The following is a special case of \cite[Theorem 6.3]{CM}, which is a generalization of the classical Alexander's theorem on regular expansions (see \cite[Theorem 3.9]{Lic}).

\begin{teo} \label{Alexander for NH-manifolds} Let $M$ be an $NH$-ball (resp. $NH$-sphere) and $B$ a combinatorial ball. Suppose $M\cap B\subseteq\bd{B}$ is an $NH$-ball or an $NH$-sphere generated by ridges of $M$ or $B$ and that $(M\cap B)^{\circ}\subseteq\tilde{\partial} M$. Then \begin{enumerate}
\item $M+B$ is an $NH$-ball (resp. $NH$-sphere) if $M\cap B$ is an $NH$-ball.
\item $M+B$ is an $NH$-sphere if $M\cap B$ is an $NH$-sphere.
\end{enumerate}\end{teo}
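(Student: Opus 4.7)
The plan is to proceed by strong induction on $d=\dim(M+B)$; the base cases $d\le 1$ are handled by direct enumeration. The inductive step splits into verifying that $M+B$ is an $NH$-manifold via its vertex links, and then certifying the appropriate $NH$-ball or $NH$-sphere structure.

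For the $NH$-manifold property, the only nontrivial vertices are those $v\in M\cap B$, where
\[
lk(v,M+B)=lk(v,M)\cup lk(v,B),\qquad lk(v,M)\cap lk(v,B)=lk(v,M\cap B).
\]
Since $v\in M\cap B\subseteq\partial B$, $lk(v,B)$ is a combinatorial ball; by Theorem \ref{teo recuento nh-variedades}, both $lk(v,M)$ and $lk(v,M\cap B)$ are $NH$-balls or $NH$-spheres. The three hypotheses of the theorem transfer to this link configuration: $lk(v,M\cap B)\subseteq\partial(lk(v,B))$ because $M\cap B\subseteq\partial B$; the ridge-generation condition descends because ridges of $lk(v,M)$ correspond to ridges of $M$ containing $v$, and similarly for $B$; and $(lk(v,M\cap B))^{\circ}\subseteq\tilde\partial(lk(v,M))$ transfers from the analogous inclusion in $M$, using that the pseudoboundary commutes with taking links. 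Since $\dim(lk(v,M+B))<d$, the inductive hypothesis applies and endows $lk(v,M+B)$ with an $NH$-ball or $NH$-sphere structure.

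For the ball/sphere classification, when $M\cap B$ is an $NH$-ball the key geometric step is to show $B\searrow M\cap B$. One first enlarges $M\cap B$ to a top generated combinatorial $(d-1)$-subball $X$ of $\partial B$ by taking a regular neighborhood in $\partial B$ (which is a combinatorial ball because $M\cap B$ is collapsible); Newman's theorem then yields $B\searrow X$, and regular-neighborhood theory gives $X\searrow M\cap B$ inside $\partial B$. Composing, $M+B\searrow M$, which gives collapsibility of $M+B$ when $M$ is an $NH$-ball. When $M$ is an $NH$-sphere with decomposition $M=B_M+L_M$, the same argument applied to the pair $(B_M,B)$ (whose intersection is still an $NH$-ball, since the pseudoboundary hypothesis forces the interior simplices of $M\cap B$ to lie in $B_M$) shows $B_M+B$ is an $NH$-ball, yielding the $NH$-sphere decomposition $M+B=(B_M+B)+L_M$.

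When $M\cap B$ is an $NH$-sphere with decomposition $M\cap B=B''+L''$, I would construct a top generated combinatorial ball $\tilde L$ in $M+B$ as a regular neighborhood of $L''$ inside $B$, using that $L''\subseteq\partial B$ and the standard collar theorem for combinatorial balls. The complementary piece $N=M+\overline{B-\tilde L}$ meets $\tilde L$ along $\partial\tilde L$, and iterated application of case (1) (with $B''$ and successive portions of $B$ playing the role of $NH$-balls to be glued) shows that $N$ is a top generated $NH$-ball, producing the required decomposition $M+B=N+\tilde L$. The main obstacle is precisely this construction: one must produce $\tilde L$ so that $\partial\tilde L$ matches the pseudoboundary of $N$ correctly, and carefully order the induction so that case (2) at dimension $d$ only invokes case (1) at dimension at most $d$ and case (2) at dimension strictly less than $d$.
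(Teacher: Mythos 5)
The first thing to note is that the paper contains no proof of Theorem \ref{Alexander for NH-manifolds} to compare yours against: the statement is explicitly quoted as a special case of Theorem 6.3 of \cite{CM} and simply imported. Judged on its own, your architecture --- induction on dimension, verifying the $NH$-manifold condition vertex by vertex via $lk(v,M+B)=lk(v,M)+lk(v,B)$ with $lk(v,M)\cap lk(v,B)=lk(v,M\cap B)$, and then separately certifying collapsibility or a sphere decomposition --- is the natural one and is in the spirit of the arguments of \cite{CM}.

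As written, however, the proposal is a strategy rather than a proof, and the gaps are concrete. First, part (2) of the theorem is not established: constructing the top generated ball $\tilde L$ and verifying that $N=M+\overline{B-\tilde L}$ is a top generated $NH$-ball meeting $\tilde L$ exactly in $\partial\tilde L$ is precisely the substance of that case, and you acknowledge it is missing. Second, several hypothesis transfers are asserted rather than proved: that the ridge-generation and pseudoboundary conditions descend to the triple $lk(v,M)$, $lk(v,B)$, $lk(v,M\cap B)$, and, in the sphere case of part (1), that the pair $(B_M,B)$ again satisfies all three hypotheses. For the latter, your stated reason only places the \emph{interior} simplices of $M\cap B$ in $B_M$; to conclude $M\cap B\subseteq B_M$ one needs the further observations that principal simplices lie in the relative interior and that $L_M$ is disjoint from $\tilde\partial M$ (via the quoted Lemma 4.8 of \cite{CM}), and even then the conditions ``generated by ridges of $B_M$ or $B$'' and $(B_M\cap B)^{\circ}\subseteq\tilde\partial B_M$ require separate arguments. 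Third, the collapse $B\searrow M\cap B$ is attributed to Newman's theorem, which concerns complements of balls in spheres; what you actually need is the standard lemma that a combinatorial $d$-ball collapses to any combinatorial $(d-1)$-ball in its boundary, together with the fact that regular neighbourhoods are subcomplexes of a derived subdivision --- harmless for collapsibility, which permits subdivision, but something that must be tracked when the definition of $NH$-sphere demands a top generated subcomplex of $M+B$ itself.
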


The next two results, which are interesting in their own right,  will be used in the last section of the article. 

\begin{prop}		\label{prop d=k implies homogeneous}

	Let $M$ be a connected $NH$-manifold of dimension $d$ such that \linebreak $H_d(M;\mathbb{Z}_2)\neq 0$. Then, $M$ is a combinatorial $d$-manifold (without boundary). In particular, if $S$ is an $NH$-sphere with $\dim_h(S)=\dim(S)$ then $S$ is a combinatorial sphere.
	
	\begin{proof}  By \cite[Theorem 3.6]{CM} it suffices to prove that $M$ is homogeneous. Let $c$ be a generating $d$-cycle of $H_d(M;\mathbb{Z}_2)$ and let $K\subset M$ be the subcomplex generated by the $d$-simplices appearing in $c$ with nonzero coefficients. We shall show that $M=K$. Note that since $K\subset M$ is top generated and $M$ is an $NH$-pseudomanifold (see Theorem \ref{teo recuento nh-variedades} \eqref{item nh-pseudo}) then $K$ is a weak pseudomanifold without boundary (since $c$ is a cycle). If $M\neq K$, let $\eta\in M-K$ be a principal simplex adjacent to $K$ and set $\rho=\eta\cap K$. Since by dimensional considerations $\rho\prec\eta$ then $lk(\rho,M)=lk(\rho,M-\eta)+lk(\rho,\eta)$ is an $NH$-sphere of homotopy dimension $0$. But $lk(\rho,K)\subset lk(\rho,M-\eta)$ is a weak pseudomanifold without boundary and hence $H_{\dim(lk(\rho,K))}(lk(\rho,K);\mathbb{Z}_2)\neq 0$. This contradicts the fact that $lk(\rho,M-\eta)$ is an $NH$-ball since a generating cycle in $lk(\rho,K)$ is also generating in $lk(\rho,M-\eta)$. Note also that $\partial M=\partial K=\emptyset$.\end{proof}

\end{prop}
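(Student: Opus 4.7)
The plan is to reduce to showing $M$ is homogeneous: once homogeneity is established, \cite[Theorem 3.6]{CM} identifies $M$ with a combinatorial $d$-manifold, and the hypothesis $H_d(M;\mathbb{Z}_2)\neq 0$ forces $\partial M=\emptyset$. To prove homogeneity I would fix a nonzero cycle $c\in Z_d(M;\mathbb{Z}_2)$ and let $K\subseteq M$ be the subcomplex generated by the $d$-simplices supporting $c$. Then $K$ is pure $d$-dimensional and top generated in $M$, and $\partial c=0$ combined with Theorem \ref{teo recuento nh-variedades}\eqref{item nh-pseudo}(a) (every ridge of $M$ lies in at most two $d$-simplices, since a $0$-dimensional $NH$-sphere has exactly two vertices) shows every $(d-1)$-face of $K$ lies in exactly two $d$-simplices of $K$. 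Hence $K$ is a weak $d$-pseudomanifold without boundary. The goal is then to prove $K=M$.

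Assume for contradiction $K\neq M$. By the strong connectivity clause Theorem \ref{teo recuento nh-variedades}\eqref{item nh-pseudo}(b), applied to a chain of adjacent principal simplices joining some principal of $K$ to a principal of $M-K$, there is a principal $\eta\in M-K$ adjacent to a principal $d$-simplex $\tau\in K$. Let $\rho=\eta\cap\tau$; a routine dimension check (using that $\eta\notin K$ rules out $\rho=\eta$, and that if $\dim\eta<d$ then the ``$\rho\prec\tau$'' alternative would already force $\eta\in K$) shows $\rho\prec\eta$, so $\rho$ is a ridge of $M$. Theorem \ref{teo recuento nh-variedades}\eqref{item nh-pseudo}(a) then leaves two options for $lk(\rho,M)$; the ``point'' option is excluded because $lk(\rho,\eta)$ and $lk(\rho,\tau)$ contribute vertices from vertex-disjoint simplices, so $lk(\rho,M)$ is an $NH$-sphere of homotopy dimension $0$. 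In the natural splitting $lk(\rho,M)=lk(\rho,M-\eta)+lk(\rho,\eta)$, the single vertex $lk(\rho,\eta)$ must play the role of the isolated combinatorial $0$-ball in the decomposition (it is disjoint from $lk(\rho,M-\eta)$ since its cone $\eta$ has been removed from $M$), forcing $lk(\rho,M-\eta)$ to be an $NH$-ball, hence collapsible and contractible.

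The contradiction is then homological: $lk(\rho,K)\subseteq lk(\rho,M-\eta)$ is itself a weak pseudomanifold without boundary of the same top dimension as $lk(\rho,M-\eta)$, so its fundamental $\mathbb{Z}_2$-cycle represents a nontrivial class in the top homology of $lk(\rho,M-\eta)$, impossible for a contractible complex. Therefore $K=M$ is homogeneous. The step I anticipate as the main obstacle is the link decomposition argument: correctly identifying $lk(\rho,\eta)$ with the isolated $0$-ball part and $lk(\rho,M-\eta)$ with the $NH$-ball part of the homotopy-dimension-$0$ $NH$-sphere, and verifying the disjointness that makes this a bona fide decomposition in the sense of the definition.
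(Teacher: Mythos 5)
Your argument is correct and follows essentially the same route as the paper's proof: extract the subcomplex $K$ generated by the simplices of a nonzero $d$-cycle, observe it is a top generated weak pseudomanifold without boundary, and if $K\neq M$ use $NH$-pseudomanifold strong connectivity to find a principal $\eta\notin K$ with ridge $\rho=\eta\cap\tau$, so that the fundamental class of $lk(\rho,K)$ gives nonzero top homology inside the $NH$-ball $lk(\rho,M-\eta)$, a contradiction. You even make explicit (via the disjointness of $lk(\rho,\eta)$ from $lk(\rho,M-\eta)$) the identification of the $NH$-ball part of the homotopy-dimension-$0$ sphere, which the paper leaves implicit.
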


\begin{coro}[Existence of spines for $NH$-manifolds]		\label{coro existence of spines}

	Every connected $NH$-manifold $M$ with non-empty boundary has a spine (i.e. it collapses to a subcomplex of smaller dimension).

	\begin{proof} Let $d$ be the dimension of $M$ and let $Y^d$ be the $d$-homogeneous subcomplex of $M$ (i.e. the subcomplex of $M$ generated by the $d$-simplices). Start collapsing the $d$-simplices of $Y^d$ and suppose we get stuck before depleting all the $d$-simplices. Then, there is a boundaryless $d$-pseudomanifold $L\subset Y^d\subset M$ and hence $0\neq H_d(L;\mathbb{Z}_2)\subset H_d(M;\mathbb{Z}_2)$. By Proposition \ref{prop d=k implies homogeneous}, $M$ is a combinatorial manifold without boundary, which is a contradiction.\end{proof}

\end{coro}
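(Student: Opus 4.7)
The plan is to argue by contradiction using Proposition \ref{prop d=k implies homogeneous}. Write $d=\dim(M)$ and let $Y^d\subset M$ be the subcomplex generated by the $d$-simplices of $M$. I would attempt to collapse $M$ greedily by repeatedly removing pairs $(\tau,\sigma)$ with $\tau$ a $d$-simplex and $\sigma\prec\tau$ a free $(d-1)$-face of the current complex. If this process eventually eliminates every $d$-simplex, the remaining subcomplex has dimension strictly less than $d$, which is the desired spine. The whole argument hinges on ruling out the alternative: that the procedure stalls while some $d$-simplex still survives.

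Suppose then the process terminates at a subcomplex $K\subseteq M$ still containing at least one $d$-simplex, and let $L\subseteq K$ be the subcomplex generated by the surviving $d$-simplices. Since there are no simplices of dimension greater than $d$ in $M$, the absence of a free $(d-1)$-face forces every $(d-1)$-face of a $d$-simplex of $L$ to be contained in at least two $d$-simplices of $L$. The NH-manifold hypothesis supplies the reverse inequality: by Theorem \ref{teo recuento nh-variedades} \eqref{item nh-pseudo}, the link in $M$ of any ridge $\rho$ is either a single vertex or an NH-sphere of homotopy dimension $0$, and since such a link is $0$-dimensional it consists in either case of at most two points. Hence every $(d-1)$-face of a $d$-simplex of $L$ lies in exactly two $d$-simplices of $L$, so $L$ is a weak $d$-pseudomanifold without boundary.

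It follows that the mod-$2$ sum of the $d$-simplices of $L$ is a cycle representing a nonzero class in $H_d(L;\mathbb{Z}_2)$. Because $\dim(M)=d$ precludes any $(d+1)$-chain in $M$, the chain-level inclusion $L\hookrightarrow M$ identifies $H_d(L;\mathbb{Z}_2)$ with a subgroup of $H_d(M;\mathbb{Z}_2)$; in particular $H_d(M;\mathbb{Z}_2)\neq 0$. Proposition \ref{prop d=k implies homogeneous} then forces $M$ to be a closed combinatorial $d$-manifold, contradicting the assumption $\partial M\neq\emptyset$.

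The step I expect to be the most delicate is the identification of $L$ as a weak pseudomanifold without boundary. The ``at least two'' bound is a direct consequence of the stalling condition, but the matching ``at most two'' bound is precisely where the NH-manifold hypothesis enters essentially, via the local classification of ridge links. Once this is secured, the passage from nontriviality of $H_d(L;\mathbb{Z}_2)$ to nontriviality of $H_d(M;\mathbb{Z}_2)$ and the final appeal to Proposition \ref{prop d=k implies homogeneous} are formal.
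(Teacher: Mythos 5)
Your proof is correct and takes essentially the same route as the paper's: greedily collapse the $d$-simplices, observe that a stalled residue $L$ is a boundaryless weak $d$-pseudomanifold (``at least two'' from stalling, ``at most two'' from the ridge-link classification in Theorem \ref{teo recuento nh-variedades}\eqref{item nh-pseudo}), conclude $0\neq H_d(L;\mathbb{Z}_2)\subseteq H_d(M;\mathbb{Z}_2)$ in top dimension, and contradict Proposition \ref{prop d=k implies homogeneous}. The only difference is that you make explicit the pseudomanifold verification that the paper leaves implicit.
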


\section{The Alexander double dual of balls and spheres} \label{Sec: Alexander Dual of Balls and Spheres}

\subsection{The Alexander dual with respect to different ground sets of vertices} We first study the relationship between the Alexander dual of a complex relative to its own set of vertices and to a bigger ground set of vertices. This is a natural question since geometrically it amounts to analyze the relation between the complement of a complex when seen as subspace of spheres of different dimensions.

 For a complex $K$ we denote $K^*=\{\sigma\in \Delta_K\,|\,\sigma^{c_{V_K}}\notin K\}$ the Alexander dual \emph{with respect to the ground set $V_K$}. Here $\sigma^{c_{V_K}}=\Delta(V_K-V_{\sigma})$ is the \emph{complement} of $\sigma$ in $V_K$. Now, for a vertex set $V\supseteq V_K$ we consider the simplex $\tau=\Delta(V-V_K)$ and we denote the Alexander dual of $K$ \emph{relative to} $V$ by $K^\tau$; that is, $K^{\tau}=\{\eta\in \Delta(V)\,|\,\eta^{c_V}\notin K\}$ where $\eta^{c_V}=\Delta(V-V_{\eta})$ is the complement of $\eta$ with respect to $V=V_K\cup V_{\tau}$. We will omit the subscript $V_K$ or $V$ in the complement when that is clear from the context. Note that if $\tau=\emptyset$ then $K^{\tau}=K^*$ is the Alexander dual of $K$ relative to its own set of vertices.

 We shall use the following convention regarding the Alexander dual of simplices and boundary of simplices: $(\Delta^d)^*=\emptyset$ and $(\partial\Delta^d)^*=\{\emptyset\}$. 
 
\begin{lema}		\label{lema formula central}
	Let $K$ be a simplicial complex and let $\tau$ be a (non-empty) simplex disjoint from $K$. Then,
	\begin{equation}	\tag{A}\label{eq formula central}
	K^{\tau}=\partial\tau\ast \Delta_K+\tau\ast K^*.\end{equation}
	Here $K^*$ is considered as a subcomplex of the simplex $\Delta_K$.
	
	In particular, we have the following consequences.
	\begin{enumerate}
		\item \label{item VK+Vtau} If $K$ is not a simplex or $\dim({\tau})\geq 1$ then $V_{K^{\tau}}=V_K\cup V_{\tau}$. If $K=\eta$ is a simplex and $\dim(\tau)=0$ then $\eta^{\tau}=\eta$. In any case, $V_{K}\subseteq V_{K^{\tau}}$.
		\item \label{item K tau dual = K} If $K$ is not a simplex or $\dim(\tau)\geq 1$ then $(K^{\tau})^*=K$.
		\item \label{item K dual rho = K} If $V_{K^*}\subsetneq V_K$ and $\rho=\Delta(V_K-V_{K^*})$ then $(K^*)^{\rho}=K$.
		\item \label{item K tau es suspension de K} If $K$ is not a simplex then $K^{\tau}\simeq\Sigma^t K^*$ for some $t\geq 0$.
	\end{enumerate}
		\begin{proof}
		Set $V=V_K\cup V_{\tau}$. Let $\sigma\in K^{\tau}$ be a principal simplex, so $\sigma^{c_V}\notin K$. If $\tau<\sigma$, say $\sigma=\tau\ast\eta$, then $\sigma^{c_V}=\eta^{c_{V_K}}$ and therefore $\sigma=\tau\ast\eta\in\tau\ast K^*$. Any other simplex in $K^{\tau}$ not containing $\tau$ lies trivially in $\partial\tau\ast \Delta_K$. For the other inclusion, if $\sigma=\tau\ast\eta$ is principal and $\eta\in K^*$ then $\sigma^{c_V}=\eta^{c_{V_K}}\notin K$, and hence $\sigma\in K^{\tau}$. If $\sigma\in\partial\tau\ast \Delta_K$ is principal then, in particular, $\Delta_K<\sigma$ and therefore $\sigma^{c_V}<\tau$. Since no vertex of $\tau$ lies in $K$, $\sigma^{c_V}\notin K$ and then $\sigma\in K^{\tau}$.
		
Item \eqref{item VK+Vtau} follows directly from formula \eqref{eq formula central} and items \eqref{item K tau dual = K}-\eqref{item K dual rho = K} from the fact that for a fixed ground set $V$, $(K^{*_V})^{*_V}=K$. Finally, \eqref{item K tau es suspension de K} follows from formula \eqref{eq formula central} since both summands are contractible (see Lemma \ref{lema dos contractiles y S1} \eqref{item K=A+B contractible}).		\end{proof}
\end{lema}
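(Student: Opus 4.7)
The plan is to establish the join decomposition (A) by a direct set-theoretic argument and then derive the four consequences from it together with the standard double-dual identity.

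For the main identity, I would use that the ambient simplex $\Delta(V)=\tau\ast\Delta_K$ (since $V_\tau$ and $V_K$ are disjoint) expresses each simplex $\sigma\in\Delta(V)$ uniquely in the form $\sigma=\sigma_\tau\ast\sigma_K$ with $\sigma_\tau\subseteq\tau$ and $\sigma_K\subseteq\Delta_K$, and that in these coordinates $\sigma^{c_V}=\sigma_\tau^{c_{V_\tau}}\ast\sigma_K^{c_{V_K}}$. Since $K$ has no vertices in $V_\tau$, the containment $\sigma^{c_V}\in K$ forces the first factor to be empty, i.e.\ $\sigma_\tau=\tau$, and under that condition it reduces to $\sigma_K^{c_{V_K}}\in K$, i.e.\ $\sigma_K\notin K^*$. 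Negating: $\sigma\in K^\tau$ iff either $\sigma_\tau\neq\tau$ (so $\sigma\in\partial\tau\ast\Delta_K$) or $\sigma_\tau=\tau$ and $\sigma_K\in K^*$ (so $\sigma\in\tau\ast K^*$). This yields the formula.

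Item (1) is then a direct reading of the vertex set from the two summands: if $\dim\tau\geq 1$ the piece $\partial\tau\ast\Delta_K$ already carries $V_K\cup V_\tau$; if $\dim\tau=0$, the first summand collapses to $\Delta_K$ and one needs $K^*\neq\emptyset$ to pick up $V_\tau$ from $\tau\ast K^*$, which holds precisely when $K$ is not a full simplex (since $\emptyset\in K^*$ iff $\Delta_K\notin K$). The excluded corner $K=\eta$, $\dim\tau=0$ gives $K^*=\emptyset$ and hence $\eta^\tau=\partial\tau\ast\Delta_K=\{\emptyset\}\ast\eta=\eta$. Items (2) and (3) both reduce to the one-line identity $(K^{*_V})^{*_V}=K$, which is immediate from the definition. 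Since $K^\tau$ is by construction the Alexander dual of $K$ with ground set $V=V_K\cup V_\tau$, the hypothesis of (2) combined with (1) gives $V_{K^\tau}=V$, whence $(K^\tau)^*=(K^{*_V})^{*_V}=K$; for (3) one applies the identity to $K^*$ with ambient $V_K$, noting that $V_K=V_{K^*}\cup V_\rho$.

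For item (4), the formula exhibits $K^\tau$ as a union $A\cup B$ of two contractible complexes (each is a cone, being a join with the simplex $\Delta_K$ or $\tau$). A direct check in the unique join coordinates shows that $A\cap B=\partial\tau\ast K^*$, and a standard homotopy pushout argument, packaged in the cited Lemma \ref{lema dos contractiles y S1}, gives $K^\tau\simeq\Sigma(\partial\tau\ast K^*)$. Since $\partial\tau=\partial\Delta^{\dim\tau}$ is already a simplicial $(\dim\tau-1)$-sphere, this is an iterated suspension of $K^*$ (one may take $t=\dim\tau+1$). I expect no single step to be a real obstacle; the main work is bookkeeping the corner cases forced by the conventions $(\Delta^d)^*=\emptyset$, $(\partial\Delta^d)^*=\{\emptyset\}$ and $\partial\Delta^0=\{\emptyset\}$, which govern exactly when items (1) and (2) degenerate.
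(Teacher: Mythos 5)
Your proposal is correct and follows essentially the same route as the paper: a direct verification of the complement identity in the join coordinates of $\Delta(V)=\tau\ast\Delta_K$, with items (1)--(3) reduced to the involution $(K^{*_V})^{*_V}=K$ and item (4) to the union-of-two-contractible-pieces lemma applied to the intersection $\partial\tau\ast K^*$. The only cosmetic differences are that the paper argues over principal simplices while you track all simplices at once, and that you make the exponent $t=\dim(\tau)+1$ explicit.
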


Note that the equation in (4) also holds for $\tau=\emptyset$ taking $t=0$. 

\begin{lema}\label{lema de cantidad de vertices} Let $K$ be a simplicial complex of dimension $d$ that is not a $d$-simplex. The following statements are equivalent.
\begin{enumerate}
\item $|V_K|=d+2$.
\item $V_{K^*}\neq V_K$.
\item $K\neq K^{**}$.
\end{enumerate}
\begin{proof}Suppose that $|V_K|=d+2$ and let $\sigma\in K$ be a $d$-simplex. Then the only vertex $v\in V_K-V_{\sigma}$ is not in $V_{K^*}$. Conversely, if $w\in V_K-V_{K^*}$ then $w^c\in K$. Since $K$ is not a $d$-simplex then $|V_K|\geq d+2$. Since $w^c$ is the simplex spanned by the vertices in $V_K-\{w\}$ and $\dim(K)=d$ then $|V_K|\leq d+2$. This proves that ($1$) and ($2$) are equivalent.

($2$) implies ($3$) since $V_{K^{**}}\subseteq V_{K^*}\subseteq V_K$. Also, ($3$) implies ($2$) since if $V_{K^*}=V_K$ then $K^{**}=K$.\end{proof}
\end{lema}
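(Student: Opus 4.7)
The plan is to establish $(1)\Leftrightarrow(2)$ and then $(2)\Leftrightarrow(3)$, both by direct arguments on vertex sets. The key observation that unlocks everything is the vertex-level translation of the definition of $K^*$: a vertex $v\in V_K$ lies in $V_{K^*}$ precisely when $\{v\}^c=\Delta(V_K-\{v\})$ is \emph{not} a simplex of $K$. Equivalently, the vertices of $V_K$ missing from $V_{K^*}$ are exactly those $v$ such that $K$ contains the codimension-$1$ face $\Delta(V_K-\{v\})$ of $\Delta_K$, i.e.\ a face that uses every vertex of $V_K$ except $v$.

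For $(1)\Rightarrow(2)$, I would start with $|V_K|=d+2$ and exploit that $K$, having dimension $d$, contains some $d$-simplex $\sigma$. Then $V_K-V_\sigma$ has exactly one element $v$, and $\{v\}^c=\sigma\in K$, so $v\notin V_{K^*}$. For the converse $(2)\Rightarrow(1)$, I would pick $w\in V_K-V_{K^*}$, note that $w^c\in K$ has dimension $|V_K|-2$, and use $\dim K=d$ to conclude $|V_K|\le d+2$. The lower bound $|V_K|\ge d+2$ is forced by the hypotheses that $\dim K=d$ and $K\neq\Delta^d$, since a $d$-dimensional complex on only $d+1$ vertices is necessarily the full simplex $\Delta^d$. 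Combining yields $|V_K|=d+2$.

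The equivalence $(2)\Leftrightarrow(3)$ is then essentially bookkeeping. For $(2)\Rightarrow(3)$, the chain $V_{K^{**}}\subseteq V_{K^*}\subsetneq V_K$ already guarantees that $K^{**}$ and $K$ differ (they do not even share the same vertex set). For the converse, I would argue contrapositively: if $V_{K^*}=V_K$, then both applications of the dual are taken with respect to the common ground set $V_K$, and the standard Alexander-duality involution $(K^{*_{V_K}})^{*_{V_K}}=K$ gives $K^{**}=K$. The only subtlety worth flagging is that the notation $K^{**}$ silently conceals a change of ground set — the second dual is inherently computed with respect to $V_{K^*}$, so the involution only applies when $V_{K^*}$ recovers all of $V_K$. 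Once this is kept in mind, no step presents a genuine obstacle; the entire argument rides on the single observation that the vertices excluded from $V_{K^*}$ correspond bijectively to the $d$-faces of $K$ (when $|V_K|=d+2$) that exhaust all but one vertex of $V_K$.
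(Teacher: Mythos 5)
Your proof is correct and follows essentially the same route as the paper's: both directions of $(1)\Leftrightarrow(2)$ rest on the observation that $v\notin V_{K^*}$ exactly when $\Delta(V_K-\{v\})\in K$, and $(2)\Leftrightarrow(3)$ is handled via the inclusion $V_{K^{**}}\subseteq V_{K^*}\subseteq V_K$ together with the involution property of the dual over a fixed ground set. Your explicit remark that $K^{**}$ is computed with respect to the possibly smaller ground set $V_{K^*}$ is exactly the point the paper's argument relies on implicitly.
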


\begin{coro}		\label{coro predual minimal vertex}
	Let $K$ be a simplicial complex and let $\tau$ be a (non-empty) simplex disjoint from $K$. Then,
		\begin{enumerate}
			\item If $K$ is not a simplex or $\dim(\tau)\geq 1$ then $|V_{K^{\tau}}|=\dim(K^{\tau})+2$.
			\item The subcomplexes $\partial\tau\ast \Delta_K,\tau\ast K^*\subset K^{\tau}$ in formula \eqref{eq formula central} of Lemma \ref{lema formula central} are top generated.
		\end{enumerate}
	\begin{proof}
		If $K$ is not a simplex, item ($1$) follows directly from Lemmas \ref{lema formula central} and \ref{lema de cantidad de vertices}. If $K=\eta$ is a simplex and $\dim(\tau)\geq 1$ then $\eta^{\tau}=\partial\tau\ast\eta$ which has dimension $\dim(\tau)+\dim(\eta)$ and $\dim(\tau)+1+|V_{\eta}|=\dim(\tau)+\dim(\eta)+2$ vertices.
		
		For ($2$), simply notice that $\partial\tau\ast \Delta_K\cap \tau\ast K^*=\partial\tau\ast K^*$ and that $K^*$ is always properly contained in $\Delta_K$.\end{proof}
\end{coro}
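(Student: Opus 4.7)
The plan is to prove (1) by bifurcating along the hypothesis and (2) by a direct vertex-set analysis of the two summands in formula \eqref{eq formula central}.

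For (1), in the case where $K$ is not a simplex, I would invoke Lemma \ref{lema formula central}\eqref{item VK+Vtau} to obtain $V_{K^\tau} = V_K \cup V_\tau$ (a disjoint union, since $\tau$ is disjoint from $K$) and Lemma \ref{lema formula central}\eqref{item K tau dual = K} to obtain $(K^\tau)^* = K$. Since $K^* \neq \emptyset$ whenever $K$ is not a simplex, formula \eqref{eq formula central} shows that $K^\tau$ has principal simplices of two genuinely different forms (one from each summand with disjoint vertex contributions from $V_K$ and $V_\tau$), so $K^\tau$ is itself not a simplex. Lemma \ref{lema de cantidad de vertices} then applies to $K^\tau$: the equivalence between its conditions (1) and (2) reduces the identity $|V_{K^\tau}| = \dim(K^\tau) + 2$ to the strict inclusion $V_{(K^\tau)^*} \subsetneq V_{K^\tau}$, which is immediate since $V_{(K^\tau)^*} = V_K$ while $V_{K^\tau} = V_K \cup V_\tau \supsetneq V_K$. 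In the remaining case $K = \eta$ is a simplex with $\dim\tau \geq 1$, one has $K^* = \emptyset$ and formula \eqref{eq formula central} collapses to $\eta^\tau = \partial\tau \ast \eta$; a direct count then gives $\dim(\eta^\tau) = \dim\tau + \dim\eta$ and $|V_{\eta^\tau}| = (\dim\tau+1)+(\dim\eta+1)$, which is $\dim(\eta^\tau)+2$.

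For (2), the key observation is that the intersection $\partial\tau \ast \Delta_K \cap \tau \ast K^* = \partial\tau \ast K^*$ is a proper subcomplex of each summand, because $K^* \subsetneq \Delta_K$ (the inclusion is strict since $\emptyset \in K$ forces $\Delta_K \notin K^*$) and $\partial\tau \subsetneq \tau$. Given a principal simplex $\sigma \ast \Delta_K$ of $\partial\tau \ast \Delta_K$, with $\sigma$ a facet of $\tau$, maximality within $\partial\tau \ast \Delta_K$ is clear from the join structure, and any simplex of $\tau \ast K^*$ strictly containing it would, by the disjointness $V_\tau \cap V_K = \emptyset$, be forced to have $\Delta_K$ itself as its $K^*$-factor, contradicting $\Delta_K \notin K^*$. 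Symmetrically, a principal simplex $\tau \ast \eta$ of $\tau \ast K^*$ cannot be a proper face of any $\sigma' \ast \Delta_K \in \partial\tau \ast \Delta_K$: the same disjointness would force $V_\tau \subseteq V_{\sigma'}$, contradicting $\sigma' \prec \tau$.

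The main obstacle is not mathematical but notational, namely handling the boundary conventions carefully (in particular $K^* = \emptyset$ when $K$ is a simplex, and the distinction between the empty complex and $\{\emptyset\}$), and verifying rigorously that in the first subcase of (1) the complex $K^\tau$ is indeed not a simplex so that Lemma \ref{lema de cantidad de vertices} genuinely applies. Once the disjoint-union hypothesis $V_K \cap V_\tau = \emptyset$ is systematically invoked, both parts reduce to routine vertex-set arithmetic.
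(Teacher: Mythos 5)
Your proposal is correct and follows essentially the same route as the paper: part (1) via Lemma \ref{lema formula central} (items \eqref{item VK+Vtau} and \eqref{item K tau dual = K}) together with the equivalence in Lemma \ref{lema de cantidad de vertices} applied to $K^{\tau}$ in the non-simplex case, the direct count $\eta^{\tau}=\partial\tau\ast\eta$ otherwise, and part (2) from $\partial\tau\ast\Delta_K\cap\tau\ast K^*=\partial\tau\ast K^*$ with $K^*\subsetneq\Delta_K$. You merely make explicit two points the paper leaves implicit (that $K^{\tau}$ is not a simplex, so Lemma \ref{lema de cantidad de vertices} applies, and the maximality check in (2)), which is a faithful expansion rather than a different argument.
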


\begin{obs}\label{remark K d d+2} Lemma \ref{lema formula central} and Corollary \ref{coro predual minimal vertex} state that every complex is the Alexander dual of a complex of dimension $d$ and $d+2$ vertices for some $d\geq 0$.\end{obs}

\subsection{Alexander double duals of balls and spheres} Suppose $A$ is a subspace of the $d$-sphere $S^d$. The complement $B=S^d-A$ is also a subspace of $S^{d'}$ for any $d'\geq d$ and taking into account that $S^d-B=A$ it is natural to ask what kind of relationship exists between $A$ and $S^{d'}-B$. In the simplicial setting this amounts to understand the similarities between a complex $K$ and $(K^{\tau})^{\sigma}$ for $V_{\tau}\cap V_K=\emptyset$ and $V_{\sigma}\cap V_{K^{\tau}}=\emptyset$. We call the complex $(K^{\tau})^{\sigma}$ a \emph{double dual} of $K$. When $\tau=\sigma=\emptyset$ we call $(K^*)^*=K^{**}$ the \emph{standard double dual} of $K$. 

Double duals share many of the properties of the original complexes. For example, it is easy to see from formula \eqref{eq formula central} of Lemma \ref{lema formula central} and Lemmas \ref{lema formula central} and \ref{lema de cantidad de vertices} that $(K^{\tau})^{\sigma}\simeq\Sigma^t K$ for some $t\geq 0$ if $|V_K|\geq d+3$. Also, it can be shown that a complex $K$ is shellable if and only if $(K^{\tau})^{\sigma}$ is shellable. We are mainly interested in double duals of combinatorial balls and spheres and we shall show that they are precisely the $NH$-balls and $NH$-spheres.
The result basically follows from the following

\begin{lema}
Let $K$ be a simplicial complex. If $V_K\subseteq V$ and $\eta\neq\emptyset$ is a simplex, then $$L=\partial\eta\ast\Delta(V)+\eta\ast K$$ is an $NH$-ball (resp. $NH$-sphere) if and only if $K$ is an $NH$-ball (resp. $NH$-sphere). Here $K$ is viewed as a subcomplex of the simplex $\Delta(V)$.

\end{lema}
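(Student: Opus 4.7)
The plan is to realize $L$ as $M+B$ with $M=\eta\ast K$ and $B=\partial\eta\ast\Delta(V)$, apply Theorem \ref{Alexander for NH-manifolds} for the forward direction, and recover $K$ as the link of $\eta$ in $L$ for the converse. The principal technical hurdle is verifying the pseudoboundary condition $(M\cap B)^{\circ}\subseteq\tilde{\partial}M$ of Theorem \ref{Alexander for NH-manifolds}, but this reduces quickly to a link computation in joins of simplices.

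For the forward direction, $B=\partial\eta\ast\Delta(V)$ is a combinatorial ball with $\partial B=\partial\eta\ast\partial\Delta(V)$, and $M=\eta\ast K$ is always an $NH$-ball by Theorem \ref{teo recuento nh-variedades}\eqref{item join de balls and spheres}. A vertex-partition argument gives $M\cap B=\partial\eta\ast K$, which is an $NH$-ball (resp.\ $NH$-sphere) exactly when $K$ is, again by Theorem \ref{teo recuento nh-variedades}\eqref{item join de balls and spheres}. Excluding the trivial case $K=\Delta(V)$ (in which $L=\eta\ast\Delta(V)$ is a simplex), we have $K\subseteq\partial\Delta(V)$, whence $M\cap B\subseteq\partial B$. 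The principal simplices of $\partial\eta\ast K$ have the form $\alpha\ast\kappa$ with $\alpha\prec\eta$ and $\kappa$ principal in $K$, each an immediate face of the principal simplex $\eta\ast\kappa$ of $M$, so $M\cap B$ is generated by ridges of $M$. Moreover, for any such $\sigma=\alpha\ast\kappa$, $lk(\sigma,M)=lk(\alpha,\eta)\ast lk(\kappa,K)$ is a nonempty simplex joined with an $NH$-ball or $NH$-sphere, hence an $NH$-ball, so $(M\cap B)^{\circ}\subseteq\tilde{\partial}M$. Theorem \ref{Alexander for NH-manifolds} now yields that $L$ is an $NH$-ball (resp.\ $NH$-sphere) whenever $K$ is.

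For the converse, assume $L$ is an $NH$-ball or $NH$-sphere. A direct computation shows $lk(\eta,L)=K$: for $\tau$ with $\tau\cap V_{\eta}=\emptyset$, the $V_{\eta}$-part of $\tau\ast\eta$ is all of $\eta$, which does not lie in $\partial\eta$, so $\tau\ast\eta\in L$ forces $\tau\ast\eta\in\eta\ast K$, i.e., $\tau\in K$. By Theorem \ref{teo recuento nh-variedades}\eqref{item link is bola o esfera}, $K$ is an $NH$-ball or $NH$-sphere. To decide which, apply Mayer--Vietoris to $L=M\cup B$: since $M$ and $B$ are contractible (each is a join with a simplex), $\tilde H_n(L)\cong\tilde H_{n-1}(\partial\eta\ast K)\cong\tilde H_{n-1-\dim(\eta)}(K)$. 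If $L$ is an $NH$-ball, then $L$ is contractible and so $K$ is acyclic; since $NH$-spheres are never acyclic (Mayer--Vietoris on the decomposition $S=B+L$ of any $NH$-sphere produces a nontrivial top class), $K$ must be an $NH$-ball. If $L$ is an $NH$-sphere, then $L$ is not acyclic, so $K$ is not acyclic and cannot be a (contractible) $NH$-ball, forcing $K$ to be an $NH$-sphere.
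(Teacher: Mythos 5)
Your proposal is correct and follows essentially the same route as the paper: identify $K=lk(\eta,L)$ and use Theorem \ref{teo recuento nh-variedades} plus a contractibility/acyclicity argument for the converse, and apply Theorem \ref{Alexander for NH-manifolds} to $M=\eta\ast K$ and $B=\partial\eta\ast\Delta(V)$ for the forward direction. The only (cosmetic) difference is in verifying $(M\cap B)^{\circ}\subseteq\tilde{\partial}(\eta\ast K)$: you compute $lk(\alpha\ast\kappa,\eta\ast K)=lk(\alpha,\eta)\ast lk(\kappa,K)$ directly, whereas the paper passes through the PL-homeomorphism $lk(\rho,\eta\ast K)\simeq_{PL}\hat{\eta}\ast lk(\rho,\partial\eta\ast K)$.
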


\begin{proof}

 Put $\Delta=\Delta(V)$. If $L$ is an $NH$-ball or $NH$-sphere then $K=lk(\eta,L)$ is either an $NH$-ball or $NH$-sphere by Theorem \ref{teo recuento nh-variedades} \eqref{item link is bola o esfera}. Since $\partial\eta\ast\Delta$ and $\eta\ast K$ are collapsible and $\partial\eta\ast\Delta\cap\eta\ast K=\partial\eta\ast K$ then $K$ will be an $NH$-ball if $L$ is one and an $NH$-sphere if $L$ is one.

Suppose $K$ is an $NH$-ball or $NH$-sphere. By Theorem \ref{teo recuento nh-variedades} \eqref{item join de balls and spheres}, $\partial\eta\ast \Delta$ is a combinatorial ball, $\eta\ast K$ is an $NH$-ball and $\partial\eta\ast\Delta\cap\eta\ast K=\partial\eta\ast K$ is an $NH$-ball or $NH$-sphere according to $K$. We use Theorem \ref{Alexander for NH-manifolds} to prove that $L$ is an $NH$-ball or $NH$-sphere. Note that $\partial\eta\ast K$ is trivially contained in $\partial(\partial\eta\ast\Delta)$ and it is generated by ridges of $\eta\ast K$. Also, if $\rho\in(\partial\eta\ast K)^{\circ}$ and $\hat{\eta}$ denotes the barycenter of $\eta$ then $$lk(\rho,\eta\ast K)\simeq_{PL} lk(\rho,\hat{\eta}\ast\partial\eta\ast K)=\hat{\eta}\ast lk(\rho,\partial\eta\ast K)$$
which is an $NH$-ball by Theorem \ref{teo recuento nh-variedades} \eqref{item pl close}. This implies that $\partial\eta\ast K\subset\tilde{\partial}(\eta\ast K)$. By Theorem \ref{Alexander for NH-manifolds}, $L$ is an $NH$-ball or $NH$-sphere.
\end{proof}

\begin{teo}			\label{teorema principal 1 double dual}
	
	Let $K$ be a simplicial complex and let $\tau$ be a simplex (possibly empty) disjoint from $K$ and $\sigma$ a simplex (possibly empty) disjoint from $K^{\tau}$ . Then $K$ is an $NH$-ball (resp. $NH$-sphere) if and only if $(K^{\tau})^{\sigma}$ is an $NH$-ball (resp. $NH$-sphere).

\end{teo}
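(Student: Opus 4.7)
The plan is to express $(K^{\tau})^{\sigma}$ via formula \eqref{eq formula central} of Lemma \ref{lema formula central} and then reduce the statement to the preceding (unnumbered) lemma.

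Assume first the generic situation in which $\sigma\neq\emptyset$ and either $\tau\neq\emptyset$ or $K$ is not a simplex. Applying \eqref{eq formula central} with $K^{\tau}$ playing the role of $K$ and $\sigma$ playing the role of $\tau$ gives
$$
(K^{\tau})^{\sigma} \;=\; \partial\sigma\ast\Delta_{K^{\tau}} \;+\; \sigma\ast(K^{\tau})^{*}.
$$
The hypothesis just made is exactly that of Lemma \ref{lema formula central}\eqref{item K tau dual = K}, which yields $(K^{\tau})^{*} = K$. Combined with the inclusion $V_K\subseteq V_{K^{\tau}}$ provided by Lemma \ref{lema formula central}\eqref{item VK+Vtau}, this lets us rewrite
$$
(K^{\tau})^{\sigma} \;=\; \partial\sigma\ast\Delta_{K^{\tau}} \;+\; \sigma\ast K,
$$
which is precisely the shape of the complex $L$ in the preceding lemma with ambient vertex set $V = V_{K^{\tau}}$ and auxiliary simplex $\eta = \sigma$. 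Invoking that lemma immediately delivers the claimed equivalence: $(K^{\tau})^{\sigma}$ is an $NH$-ball (resp.\ $NH$-sphere) if and only if $K$ is.

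The remaining degenerate cases require only direct inspection. When $\sigma = \emptyset$ and either $\tau\neq\emptyset$ or $K$ is not a simplex, Lemma \ref{lema formula central}\eqref{item K tau dual = K} already gives $(K^{\tau})^{\sigma} = (K^{\tau})^{*} = K$, so the equivalence is tautological. When $K = \eta$ is a simplex and $\dim(\tau)\leq 0$, one has $K^{\tau}=K$ (Lemma \ref{lema formula central}\eqref{item VK+Vtau}) or $K^{\tau}=\emptyset$, and a short computation using the conventions $(\Delta^d)^{*} = \emptyset$ and $(\partial\Delta^d)^{*} = \{\emptyset\}$ together with Theorem \ref{teo recuento nh-variedades}\eqref{item join de balls and spheres} expresses $(K^{\tau})^{\sigma}$ as a join of a boundary of a simplex with a simplex, hence a combinatorial ball, matching the fact that $K$ is itself a combinatorial ball.

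The substantive content of the argument is concentrated in the preceding lemma; the theorem is the identity $(K^{\tau})^{\sigma} = \partial\sigma\ast\Delta_{K^{\tau}} + \sigma\ast K$ followed by a direct appeal. The only expected obstacle is bookkeeping: carefully verifying that the conventions for Alexander duals of simplices and their boundaries remain consistent across the edge cases where $K$ is itself a simplex or one of the predual simplices is empty.
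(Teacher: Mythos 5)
Your reduction is sound for $\tau\neq\emptyset$, where it coincides with the paper's argument, but it breaks down in precisely the case the theorem is really about: $\tau=\emptyset$. Lemma \ref{lema formula central}\eqref{item K tau dual = K} is stated for a \emph{non-empty} simplex $\tau$ (with $K$ not a simplex or $\dim(\tau)\geq 1$); when $\tau=\emptyset$ the identity you invoke becomes $(K^*)^*=K^{**}=K$, and by Lemma \ref{lema de cantidad de vertices} this fails exactly when $|V_K|=\dim(K)+2$, since then $K^*$ loses vertices and dualizing back does not recover $K$. Consequently your ``generic'' identity $(K^{\tau})^{\sigma}=\partial\sigma\ast\Delta_{K^{\tau}}+\sigma\ast K$ is wrong for $\tau=\emptyset$ and $|V_K|=\dim(K)+2$ (the correct one is $(K^*)^{\sigma}=\partial\sigma\ast\Delta_{K^*}+\sigma\ast K^{**}$ with $K^{**}\neq K$), and for the same reason your claim that the sub-case $\sigma=\tau=\emptyset$ is ``tautological'' is false. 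This standard double dual $K^{**}$ is not a peripheral corner: it is the case used later (e.g.\ in Proposition \ref{propo d+2 nh manifolds}), and it is where the passage from balls to genuinely non-homogeneous $NH$-balls occurs.

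The missing step is the paper's opening paragraph. For $\tau=\sigma=\emptyset$ one may assume $|V_K|=\dim(K)+2$ (otherwise $K^{**}=K$ and there is nothing to prove), set $\rho=\Delta(V_K-V_{K^*})\neq\emptyset$, and use Lemma \ref{lema formula central}\eqref{item K dual rho = K} to write $K=(K^*)^{\rho}=\partial\rho\ast\Delta_{K^*}+\rho\ast K^{**}$; applying the lemma preceding the theorem to \emph{this} decomposition of $K$ itself gives that $K$ is an $NH$-ball (resp.\ $NH$-sphere) if and only if $K^{**}$ is. Once that equivalence is available, the case $\tau=\emptyset$, $\sigma\neq\emptyset$ follows by applying the same lemma a second time to $(K^*)^{\sigma}=\partial\sigma\ast\Delta_{K^*}+\sigma\ast K^{**}$ and chaining the two equivalences. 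With this insertion your treatment of $\tau\neq\emptyset$ is exactly the paper's. A minor bookkeeping point: the hypothesis of Lemma \ref{lema formula central}\eqref{item K tau dual = K} is ``$K$ is not a simplex or $\dim(\tau)\geq 1$,'' not ``$\tau\neq\emptyset$ or $K$ is not a simplex,'' so the sub-case where $K$ is a simplex and $\dim(\tau)=0$ must be excluded from your generic case (as you in effect do by listing it among the degenerate ones, but the two case descriptions currently overlap).
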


\begin{proof}

	 	We first prove the case $\tau=\sigma=\emptyset$. By Lemma \ref{lema de cantidad de vertices} we may assume $|V_K|=\dim(K)+2$. Let $\rho=\Delta(V_K-V_{K^*})\neq\emptyset$ so $K=(K^*)^{\rho}=\partial\rho\ast\Delta_{K^*}+\rho\ast K^{**}$ by Lemma \ref{lema formula central} \eqref{item K dual rho = K}. The result now follows from the previous lemma.
	 	
	 	If $K$ is a simplex and $\dim(\tau)=0$ the result is trivial. For the remaining cases we have	 	
	 	\begin{equation*} \label{eq 1 rumpel}
		(K^{\tau})^{\sigma}=\left\{\begin{array}{ll}
\partial\sigma\ast\Delta_{K^*}+\sigma\ast K^{**}& \tau=\emptyset\,,\,\sigma\neq\emptyset\\
K&\tau\neq\emptyset\,,\,\sigma=\emptyset\\
\partial\sigma\ast\Delta_{K^{\tau}}+\sigma\ast K& \tau\neq\emptyset\,,\,\sigma\neq\emptyset
\end{array}\right.
	\end{equation*}
	and the result follows from the previous lemma and the case $\tau,\sigma=\emptyset$.\end{proof}

\begin{coro} $NH$-balls are the double duals of combinatorial balls. $NH$-spheres are the double duals of combinatorial spheres.\end{coro}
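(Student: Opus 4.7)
The plan is to deduce this immediately from Theorem \ref{teorema principal 1 double dual}. The key observation is that combinatorial balls and combinatorial spheres are particular cases of NH-balls and NH-spheres, respectively: a combinatorial $d$-ball is PL-homeomorphic to $\Delta^d$, hence a collapsible combinatorial $d$-manifold, and combinatorial manifolds are (homogeneous) NH-manifolds by \cite[Theorem 3.6]{CM}; a combinatorial ball is thus a collapsible NH-manifold, i.e., an NH-ball. The same reasoning shows combinatorial spheres are NH-spheres.

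With this observation in hand, one direction of the characterization is an immediate one-line application of Theorem \ref{teorema principal 1 double dual}: given a combinatorial ball $K$ (which is therefore an NH-ball) and any admissible simplices $\tau$ (disjoint from $V_K$) and $\sigma$ (disjoint from $V_{K^\tau}$), the bi-implication of the theorem yields that $(K^\tau)^\sigma$ is an NH-ball; the sphere statement follows by substituting ``sphere'' for ``ball'' throughout.

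For the reverse containment (every NH-ball is a double dual of some combinatorial ball), the natural approach is to reverse the construction: given an NH-ball $L$, reduce to the case $|V_L| = \dim(L)+2$ via Lemma \ref{lema de cantidad de vertices}, take $\rho = \Delta(V_L - V_{L^*})$ and use Lemma \ref{lema formula central}(3) to write $L = (L^*)^\rho = \partial\rho \ast \Delta_{L^*} + \rho \ast L^{**}$, then identify a combinatorial ball $K$ such that one of the cases in the proof of Theorem \ref{teorema principal 1 double dual} (most plausibly Case 4, giving $L = \partial\sigma \ast \Delta(V) + \sigma \ast K$) recovers $L$. I expect the main obstacle to be ensuring that $K$ is actually combinatorial—not merely NH—which is where the combinatorial/collapsibility structure of $K$ must be carefully extracted from the NH-structure of $L$; the fact that the technical machinery for equivalences between $K$ and its double dual has been already packaged into Theorem \ref{teorema principal 1 double dual} makes this feasible.
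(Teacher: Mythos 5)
Your opening observation and the forward implication are correct and are exactly the paper's (unwritten) argument: a combinatorial ball (resp.\ sphere) is a collapsible combinatorial manifold, hence an $NH$-ball (resp.\ $NH$-sphere), and Theorem \ref{teorema principal 1 double dual} then gives that every double dual $(K^{\tau})^{\sigma}$ of a combinatorial ball (resp.\ sphere) is an $NH$-ball (resp.\ $NH$-sphere). That one-directional statement is the content the corollary is meant to carry (compare the introduction: ``the double duals of balls (resp.\ spheres) are $NH$-balls (resp.\ $NH$-spheres)''), and for it your proof is complete.

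The reverse containment you then set out to prove --- that \emph{every} $NH$-ball is a double dual of some combinatorial ball --- is a genuine gap: you only sketch a plan and flag the obstacle of making the predual combinatorial, and that obstacle cannot be removed because the containment is false. Take $M$ with facets $\{a,b,c\}$, $\{c,d\}$, $\{d,e\}$. This is an $NH$-ball: it is collapsible and each vertex link is an $NH$-ball or an $NH$-sphere (for instance $lk(c,M)$ is the disjoint union of the edge $ab$ and the vertex $d$, an $NH$-sphere of homotopy dimension $0$). But $M$ has $\dim(M)+3$ vertices and is not a simplex, whereas by Corollary \ref{coro predual minimal vertex} any double dual $(K^{\tau})^{\sigma}$ with $\sigma\neq\emptyset$ is either a simplex or has exactly $\dim+2$ vertices; and if $\sigma=\emptyset$ then $(K^{\tau})^{*}$ equals $K$ itself (Lemma \ref{lema formula central} (2)), or $\emptyset$, or (when $\tau=\emptyset$ and $|V_K|=\dim(K)+2$) the link $lk(\rho,K)=K^{**}$ of a face of $K$ --- all homogeneous when $K$ is a combinatorial ball, while $M$ is not. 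Note also that your proposed first reduction does not do what you want: when $|V_L|\geq\dim(L)+3$, Lemma \ref{lema de cantidad de vertices} gives $L=L^{**}$, which exhibits $L$ only as the standard double dual of itself, not of a combinatorial ball. So you should read (and prove) the corollary only in the direction you handled first.
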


It is known that a $d$-homogeneous complex with $d+2$ vertices is either the boundary of a simplex or an elementary starring of a simplex (see \cite[Lemma 6]{Mani}) but for a general $d$-dimensional complex with $d+2$ vertices not even its homotopy type can be known beforehand. However, when the complex is an $NH$-manifold then it is either contractible or homotopy equivalent to a sphere. Actually, the next stronger result holds.

\begin{prop}\label{propo d+2 nh manifolds} If $M$ is an $NH$-manifold of dimension $d$ and $d+2$ vertices then $M$ is an $NH$-ball or $NH$-sphere.\end{prop}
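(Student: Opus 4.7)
My plan is to combine the decomposition $M=(M^*)^\rho$ coming from Lemma~\ref{lema formula central}\eqref{item K dual rho = K} with the link property of $NH$-manifolds (Theorem~\ref{teo recuento nh-variedades}\eqref{item link is bola o esfera}) and the Lemma immediately preceding Theorem~\ref{teorema principal 1 double dual}. The idea is to extract $M^{**}$ as a link inside $M$, conclude from the link property that it is an $NH$-ball or $NH$-sphere, and then transfer this back to $M$ via the recognition Lemma. To start, since $|V_M|=d+2>d+1$, the complex $M$ is not a $d$-simplex, so Lemma~\ref{lema de cantidad de vertices} gives $V_{M^*}\subsetneq V_M$. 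Setting $\rho=\Delta(V_M-V_{M^*})\neq\emptyset$, formula \eqref{eq formula central} of Lemma~\ref{lema formula central} yields
\[
M=(M^*)^{\rho}=\partial\rho\ast\Delta_{M^*}+\rho\ast M^{**}.
\]

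Suppose first that $M^{**}$ is nonempty as a complex, so that $\rho=\rho\ast\emptyset\in\rho\ast M^{**}\subseteq M$. A direct computation, using that $V_\rho$ and $V_{M^*}$ are disjoint, shows that no simplex of $\partial\rho\ast\Delta_{M^*}$ contains $\rho$, while the simplices of $\rho\ast M^{**}$ containing $\rho$ are precisely those of the form $\rho\ast\tau$ with $\tau\in M^{**}$. Consequently $lk(\rho,M)=M^{**}$, and Theorem~\ref{teo recuento nh-variedades}\eqref{item link is bola o esfera} guarantees that $M^{**}$ is an $NH$-ball or $NH$-sphere. Applying the Lemma preceding Theorem~\ref{teorema principal 1 double dual} with $\eta=\rho$, $V=V_{M^*}$ and $K=M^{**}$ then transfers the same conclusion to $M$.

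It remains to handle the degenerate possibility that $M^{**}$ is the empty complex. In that case $\rho\ast M^{**}=\emptyset$, so $M=\partial\rho\ast\Delta_{M^*}$; when $V_{M^*}=\emptyset$ this is just $\partial\rho$, a combinatorial sphere, and otherwise it is the join of a combinatorial sphere with a combinatorial ball, hence a combinatorial ball. In either situation $M$ is an $NH$-sphere or $NH$-ball. I expect the main delicate point of the argument to be the link identification $lk(\rho,M)=M^{**}$, which requires care with the convention distinguishing the empty complex $\emptyset$ from the $(-1)$-sphere $\{\emptyset\}$ and with the bookkeeping of how $\rho$ sits in each of the two summands; once this is clear, everything else is a packaging of the tools developed earlier in the section.
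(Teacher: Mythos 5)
Your argument is correct and is essentially the paper's own proof: both pass to $M=(M^*)^{\rho}=\partial\rho\ast\Delta_{M^*}+\rho\ast M^{**}$, identify $M^{**}=lk(\rho,M)$ to invoke Theorem \ref{teo recuento nh-variedades}\eqref{item link is bola o esfera}, and transfer the conclusion back to $M$ via the recognition lemma (the paper cites Theorem \ref{teorema principal 1 double dual}, which reduces to that same lemma), with your degenerate case $M^{**}=\emptyset$ coinciding with the paper's case ``$M^*$ is a simplex.'' Your explicit verification of $lk(\rho,M)=M^{**}$ is a welcome detail the paper leaves implicit.
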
 

\begin{proof} By Lemma \ref{lema de cantidad de vertices} $\rho=\Delta(V_M-V_{M^*})\neq\emptyset$ and  by Lemma \ref{lema formula central}  
$$M=(M^*)^{\rho}=\partial\rho\ast \Delta_{M^*}+\rho\ast M^{**}.$$
If $M^*$ is a simplex then $M=\partial\rho\ast\Delta_{M^*}$ is an $NH$-ball. Otherwise, since $M$ is an $NH$-manifold then $M^{**}=lk(\rho,M)$ must be an $NH$-ball or $NH$-sphere by Theorem \ref{teo recuento nh-variedades} \eqref{item link is bola o esfera}. Therefore $M$ is an $NH$-ball or an $NH$-sphere by Theorem \ref{teorema principal 1 double dual}.\end{proof}

\section{Main results}

In this section we generalize Dong's result on the Alexander dual of simplicial spheres \cite{Dong} and Santos-Sturmfels' result on simplicial balls \cite{SS} to the more general setting of $NH$-spheres and $NH$-balls. First we need some lemmas. For $v\in V_K$, $K-v=\overline{K-st(v,K)}$ denotes the \emph{deletion} of $v$.

\begin{lema}		\label{lema d+2 is the link}

Let $K$ be a complex of dimension $d$ and $d+2$ vertices. Then, for every vertex $u\in V_K-V_{K^*}$ we have that $K^*=(lk(u,K))^{\tau}$ where $\tau=\Delta(V_K-V_{st(u,K)}).$

\begin{proof} By hypothesis we can write $K=\Delta^d+u\ast lk(u,K)$. Let $\tau$ be as in the statement. Then,
\begin{align*}
\sigma\in (lk(u,K))^\tau & \Leftrightarrow  \Delta(V_{lk(u,K)}\cup V_{\tau}-V_{\sigma})\notin lk(u,K)\\
& \Leftrightarrow  \Delta(V_{lk(u,K)}\cup (V_K-V_{st(u,K)})-V_{\sigma})\notin lk(u,K)\\
& \Leftrightarrow  \Delta(V_K-\{u\}-V_{\sigma})\notin lk(u,K)\\
& \Leftrightarrow  u\ast\Delta(V_K-\{u\}-V_{\sigma})\notin K\\
& \Leftrightarrow  \Delta(V_K-V_{\sigma})\notin K\\
& \Leftrightarrow  \sigma\in K^*.\qedhere
\end{align*}\end{proof}

\end{lema}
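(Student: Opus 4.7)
My plan is to prove the set equality $K^*=(lk(u,K))^{\tau}$ by a direct chase of the defining conditions, exploiting the hypothesis $|V_K|=d+2$ to make the complement operation transparent.

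First I would unpack the hypothesis. Since $u\in V_K-V_{K^*}$, by definition the simplex $u^{c_{V_K}}=\Delta(V_K-\{u\})$ belongs to $K$. As $|V_K|=d+2$, this is a $d$-simplex; call it $\Delta^d$. Every simplex of $K$ that does not contain $u$ is supported on $V_K-\{u\}$, hence is a face of $\Delta^d$, and every simplex containing $u$ is of the form $u\ast\eta$ with $\eta\in lk(u,K)$. Thus we have the decomposition $K=\Delta^d+u\ast lk(u,K)$, which matches the setting of the paper's definition of $NH$-structure via links at $u$.

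Next I would compute the ground set underlying $(lk(u,K))^{\tau}$. By definition $\tau=\Delta(V_K-V_{st(u,K)})$, and $V_{st(u,K)}=\{u\}\cup V_{lk(u,K)}$, so $V_\tau=V_K-\{u\}-V_{lk(u,K)}$. The ground set for the dual $(lk(u,K))^{\tau}$ is therefore
\[
V_{lk(u,K)}\cup V_\tau \;=\; V_K-\{u\}.
\]
This already tells me both complexes $K^*$ and $(lk(u,K))^{\tau}$ live in $\Delta(V_K-\{u\})$: for $K^*$, because any $\sigma\ni u$ has complement $\Delta(V_K-V_\sigma)\subsetneq\Delta^d\in K$, so $\sigma\notin K^*$.

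The remaining step is the chain of equivalences. For $\sigma\in\Delta(V_K-\{u\})$, I have $u\in V_K-V_\sigma$, so $\Delta(V_K-V_\sigma)=u\ast\Delta(V_K-\{u\}-V_\sigma)$. Since a simplex of the form $u\ast\eta$ lies in $K$ iff $\eta\in lk(u,K)$, the condition $\sigma\in K^*$ (meaning $\Delta(V_K-V_\sigma)\notin K$) becomes $\Delta(V_K-\{u\}-V_\sigma)\notin lk(u,K)$. Rewriting $V_K-\{u\}=V_{lk(u,K)}\cup V_\tau$, this is exactly the condition $\sigma\in(lk(u,K))^\tau$. Combining these gives the desired equality.

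No step is genuinely difficult; the whole content is careful bookkeeping between the three relevant vertex sets $V_K$, $V_{lk(u,K)}$, and $V_\tau$. The one subtlety worth flagging is that one must first verify that both $K^*$ and $(lk(u,K))^\tau$ are supported on $V_K-\{u\}$ before manipulating complements, because the definitions live in a priori different ambient simplices, and this is where the hypothesis $|V_K|=d+2$ (rather than just $u\notin V_{K^*}$) is actually used.
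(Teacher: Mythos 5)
Your proof is correct and follows essentially the same route as the paper's: the decomposition $K=\Delta^d+u\ast lk(u,K)$ followed by the chain of equivalences translating $\Delta(V_K-V_\sigma)\notin K$ into $\Delta(V_K-\{u\}-V_\sigma)\notin lk(u,K)$. Your extra preliminary check that both $K^*$ and $(lk(u,K))^{\tau}$ are supported on $V_K-\{u\}$ is a point the paper leaves implicit, and it is a worthwhile clarification rather than a departure.
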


\begin{lema}\label{lema link y deletion dual} Let $K\neq\Delta^d$ be a simplicial complex of dimension $d$ and let $v\in V_K$. Then,
\begin{enumerate}
\item \label{item la que vale siempre} $lk(v,K^*)=(K-v)^*$.
\item \label{item la que vale a veces} $lk(v,K)=(K^*-v)^{\tau}$ where $\tau=\Delta(V_{K-v}-V_{K^*-v})$.
\item \label{item B*-v suspension of link*} If $v$ is not isolated and $lk(v,K)$ is not a simplex then $K^*-v\simeq\Sigma^t lk(v,K)^*$ for some $t\geq 0$.
\item \label{item B*-v contractible} If $lk(v,K)$ is a simplex then $K^*-v$ is contractible.
\end{enumerate}
\begin{proof} For (1), $$\sigma\in lk(v,K^*)\Leftrightarrow v\ast\sigma\in K^* \Leftrightarrow(v\ast\sigma)^c\notin K\Leftrightarrow \sigma^c\notin K-v\Leftrightarrow \sigma\in (K-v)^*.$$

To prove (2), take any $x\notin V_K$. Since $K\neq\Delta^d$ then $(K^x)^*=K$ and by (1),  $$lk(v,K)=lk(v,(K^x)^*)=(K^x-v)^*.$$ Note that $K^x=\Delta_K+x\ast K^*$, and then $$K^x-v=\Delta_K-v+x\ast K^*-v=\Delta(V_K-v)+x\ast(K^*-v).$$ Now Lemma \ref{lema d+2 is the link} implies that $$(K^x-v)^*=lk(x,K^x-v)^{\tau}=(K^*-v)^{\tau}$$ where $\tau=\Delta(V_{K^x-v}-V_{st(x,K^x-v)})=\Delta(V_{K-v}-V_{K^*-v})$. This proves (2). 

To prove (3), apply Alexander dual to the equality given in (2) to yield $$lk(v,K)^*=((K^*-v)^{\tau})^*.$$ When $\tau\neq \emptyset$, this equals $K^*-v$ by Lemma \ref{lema formula central} \eqref{item K tau dual = K}, which settles the result with $t=0$. Note that, by hypothesis, $K^*-v=\Delta^r$ and $\dim(\tau)=0$ cannot simultaneously hold. 

Suppose now that $\tau=\emptyset$. Denote $T=K^*-v$. If $\dim(T)\neq |V_{T}|-2$ then $lk(v,K)^*=T^{**}=T$ by Lemma \ref{lema de cantidad de vertices} and the result holds with $t=0$. If $\dim(T)=|V_{T}|-2$ then $\rho=\Delta(V_{T}-V_{T^*})\neq\emptyset$ and $$T=(T^*)^{\rho}=\partial\rho\ast\Delta_{T^*}+\rho\ast T^{**}=\partial\rho\ast\Delta_{T^*}+\rho\ast lk(v,K)^*.$$ Since by hypothesis $\Delta_{T^*}=\Delta_{lk(v,K)}\neq\emptyset$ and $T^{**}=lk(v,K)^*\neq\emptyset$ then
$$K^*-v=T\simeq\Sigma(\partial\rho\ast lk(v,K)^*)\simeq\Sigma^t lk(v,K)^*.$$

To prove (4) note that if $(K^*-v)^{\tau}=lk(v,K)$ is a simplex then $K^*-v$ is an $NH$-ball by Theorem \ref{teorema principal 1 double dual}.\end{proof}

\end{lema}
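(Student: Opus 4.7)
My plan is to derive (1) directly from the definitions, obtain (2) from (1) by a ``predual'' trick, and then deduce (3) and (4) by dualizing (2) and invoking the structural results of Section \ref{Sec: Alexander Dual of Balls and Spheres}. For part (1), the key observation is that any $\sigma \in lk(v,K^*)$ forces $v \notin V_\sigma$, so the complement of $v \ast \sigma$ in $V_K$ coincides with the complement of $\sigma$ with respect to $V_{K-v}$; moreover, since this complement does not contain $v$, it lies in $K$ if and only if it lies in $K-v$. Chaining these equivalences through the definition of $lk$ and of the Alexander dual yields $lk(v,K^*) = (K-v)^*$.

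For part (2), I choose a fresh vertex $x \notin V_K$ and form $K^x = \Delta_K + x \ast K^*$ by Lemma \ref{lema formula central}. Since $K \neq \Delta^d$, Lemma \ref{lema formula central}\eqref{item K tau dual = K} gives $(K^x)^* = K$, so applying part (1) to $K^x$ at the vertex $v$ yields $lk(v,K) = (K^x-v)^*$. A direct computation shows $K^x - v = \Delta(V_K - v) + x \ast (K^* - v)$, which has exactly the shape required by Lemma \ref{lema d+2 is the link} with ``apex'' vertex $x$. That lemma then identifies $(K^x - v)^*$ with $(lk(x, K^x-v))^\tau = (K^*-v)^\tau$ for $\tau = \Delta(V_{K-v} - V_{K^*-v})$.

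For (3) and (4), I dualize (2) to obtain $lk(v,K)^* = ((K^*-v)^\tau)^*$. In the generic case (when $K^*-v$ is not a simplex or $\tau \neq \emptyset$), Lemma \ref{lema formula central}\eqref{item K tau dual = K} collapses the right-hand side to $K^*-v$, giving (3) with $t=0$. The degenerate case ($\tau = \emptyset$ together with $\dim(K^*-v) = |V_{K^*-v}|-2$) is handled by setting $T = K^*-v$ and $\rho = \Delta(V_T - V_{T^*})$, using Lemma \ref{lema formula central} to decompose $T = \partial\rho \ast \Delta_{T^*} + \rho \ast T^{**}$, and then recognizing this join as an iterated suspension of $lk(v,K)^*$. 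For (4), when $lk(v,K)$ is a simplex it is an $NH$-ball; applying Theorem \ref{teorema principal 1 double dual} to the equation $(K^*-v)^\tau = lk(v,K)$ (with a suitable auxiliary $\sigma$ disjoint from $lk(v,K)$) then forces $K^*-v$ itself to be an $NH$-ball, and hence contractible.

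The main obstacle I anticipate is the case analysis in (3) when $\tau = \emptyset$: one must track the vertex count via Lemma \ref{lema de cantidad de vertices} to decide whether a genuine suspension appears, and if so extract it through Lemma \ref{lema formula central}\eqref{item K tau es suspension de K}, keeping careful control of which summands in the decomposition of $T$ are empty. Every other step reduces to routine unpacking of the Alexander dual construction combined with the preceding lemmas and Theorem \ref{teorema principal 1 double dual}.
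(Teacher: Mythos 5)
Your proposal follows the paper's own proof essentially step for step: the same definition chase for (1), the same predual trick via $K^x=\Delta_K+x\ast K^*$ and Lemma \ref{lema d+2 is the link} for (2), the same dualization with the case split on $\tau$ and the vertex count of $K^*-v$ for (3), and the same appeal to Theorem \ref{teorema principal 1 double dual} for (4). The approach is correct and matches the paper's argument.
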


The following result is standard.

\begin{lema}\label{lema dos contractiles y S1} Let $K$ be a finite simplicial complex and $A,B\subset K$ subcomplexes such that $K=A+B$.\begin{enumerate}
\item \label{item K=A+B contractible} If $A$ and $B$ are contractible  then $K\simeq \Sigma(A\cap B)$. If, in addition, $K$ is acyclic then $K$ is contractible. In particular, acyclic simplicial complexes of dimension $d$ and $d+2$ vertices are contractible.
\item \label{item AcapB y B contractible K=A} If $A\cap B$ and $B$ are contractible then $K\simeq A$.
\end{enumerate}
\end{lema}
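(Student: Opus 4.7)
The plan for (1) is to view $K=A\cup B$ as a homotopy pushout of the span $A\leftarrow A\cap B\to B$, which is legitimate since all the subcomplex inclusions are cofibrations. Using the contracting homotopies of $A$ and $B$ (lifted through the cofibrations via the homotopy extension property), the pushout becomes equivalent to $\mathrm{pt}\leftarrow A\cap B\to\mathrm{pt}$, which is by definition the unreduced suspension $\Sigma(A\cap B)$. Hence $K\simeq\Sigma(A\cap B)$. If in addition $K$ is acyclic then it is connected, so $A\cap B$ must be nonempty (otherwise $K=A\sqcup B$ is disconnected); consequently $\Sigma(A\cap B)$ is simply connected. Since it is also acyclic, Hurewicz together with Whitehead finish the job.

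For the final assertion of (1), I would produce such a decomposition when $\dim K=d$ and $|V_K|=d+2$. Any $d$-simplex of $K$ uses $d+1$ of the $d+2$ available vertices, so it has the form $\Delta(V_K-\{v\})$ for some $v\in V_K$. Taking $A=K-v$ and $B=st(v,K)$, the star $B$ is a cone on $lk(v,K)$ and hence contractible, while $A$ is simultaneously a subcomplex of $\Delta(V_K-\{v\})$ (its vertex set is $V_K-\{v\}$) and a supercomplex of it, forcing $A=\Delta^d$. Both summands are contractible and $K=A+B$, so the first part of (1) applies.

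For (2) the plan is again pushout-theoretic: since $A\cap B\hookrightarrow K$ is a cofibration with contractible domain, $K\simeq K/(A\cap B)$, and the quotient is the wedge $A/(A\cap B)\vee B/(A\cap B)$ at the collapsed point. The general fact that $X/Y\simeq X$ when $Y\hookrightarrow X$ is a cofibration with $Y$ contractible then yields $A/(A\cap B)\simeq A$ and $B/(A\cap B)\simeq B\simeq\mathrm{pt}$, so $K\simeq A\vee\mathrm{pt}\simeq A$. Since the lemma is explicitly flagged as standard, I do not anticipate any genuine obstacle; the only real care is to exclude the degenerate case $A\cap B=\emptyset$ in the acyclicity clause of (1), which is ruled out by the connectedness implied by $K$ being acyclic.
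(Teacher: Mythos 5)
Your proof is correct, and since the paper states this lemma without proof (flagging it as standard), there is no authorial argument to compare against; your homotopy-pushout treatment of both items, together with the decomposition $K=(K-v)+st(v,K)=\Delta^d+v\ast lk(v,K)$ for the $(d,d+2)$-vertex case, is exactly the expected argument and matches how the paper uses such decompositions elsewhere. The only point worth recording is the one you already flag: when $A\cap B=\emptyset$ the identification $K\simeq\Sigma(A\cap B)$ should be read with the topologist's convention $\Sigma\emptyset=S^0$ (the paper's simplicial convention $\partial\Delta^1\ast\emptyset=\emptyset$ would not do), and the acyclicity clause correctly excludes this case via connectedness.
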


The following is a rewriting of \cite[Lemma 2.4]{MR}.

\begin{lema}\label{remark MiRo}  Let $L$ be a subcomplex of $K$. Then $K\searrow L$ if and only if $K^*\nearrow L^{\tau}$ where $\tau=\Delta(V_K-V_L)$. In particular, if $L^*$ is contractible or homotopy equivalent to a sphere then so is $K^*$.\end{lema}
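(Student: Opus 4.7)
The plan is to work throughout inside $\Delta(V_K)$, taking every Alexander dual with respect to the fixed ground set $V_K$: for a subcomplex $M\subseteq K$ set $\mu_M=\Delta(V_K-V_M)$ and view $M^{\mu_M}\subseteq\Delta(V_K)$, so that $K^{\mu_K}=K^*$ and $L^{\mu_L}=L^{\tau}$. A collapse $K\searrow L$ decomposes as a finite chain of elementary collapses $K=K_n\searrow^e\cdots\searrow^e K_0=L$, so it suffices to establish the one-step correspondence
\[
K_{i+1}\searrow^e K_i\ \iff\ K_{i+1}^{\mu_{i+1}}\nearrow^e K_i^{\mu_i}\qquad(\text{inside }\Delta(V_K));
\]
the two directions are literally the same calculation read in reverse, which handles the converse for free.

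Suppose the elementary collapse $K_{i+1}\searrow^e K_i$ removes a principal simplex $\alpha$ together with its free face $\beta\prec\alpha$. Directly from the definition ($\eta\in(\cdot)^{\mu_?}$ iff $\eta^{c}\notin(\cdot)$, with $\eta^{c}=V_K-\eta$) one reads off
\[
K_i^{\mu_i}\ =\ K_{i+1}^{\mu_{i+1}}\cup\{\alpha^{c},\beta^{c}\},
\]
because $\eta^{c}\notin K_i$ precisely when either $\eta^{c}\notin K_{i+1}$ or $\eta^{c}\in\{\alpha,\beta\}$. Since complementation reverses inclusion, $\alpha^{c}\prec\beta^{c}$ and $\dim\beta^{c}=\dim\alpha^{c}+1$. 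To check that this union is an elementary expansion, one verifies that $\beta^{c}$ is principal in $K_i^{\mu_i}$ and that $\alpha^{c}$ is a free face of $\beta^{c}$. Both conditions reduce to the same observation: a proper coface of $\beta^{c}$ (respectively of $\alpha^{c}$, other than $\beta^{c}$) in $\Delta(V_K)$ has the form $\gamma^{c}$ with $\gamma$ a proper face of $\beta$ (respectively of $\alpha$, other than $\beta$), and any such $\gamma$ lies in $K_i$ because $\alpha$ was principal and $\beta$ was free in $K_{i+1}$; hence $\gamma^{c}\notin K_i^{\mu_i}$.

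Iterating the one-step yields $K^*\nearrow L^{\tau}$, so in particular $K^*\simeq L^{\tau}$. For the ``in particular'' statement, if $L$ is a simplex then $L^*=\emptyset$ satisfies neither hypothesis, so one may assume otherwise; Lemma \ref{lema formula central}(4) then gives $L^{\tau}\simeq\Sigma^t L^*$ for some $t\geq 0$, and since both contractibility and sphere-homotopy-type are preserved under suspension, $L^*$ contractible (respectively $\simeq S^k$) forces $K^*\simeq L^{\tau}\simeq\Sigma^t L^*$ contractible (respectively $\simeq S^{k+t}$).

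The main delicate point I anticipate is the case of an elementary collapse that removes a vertex ($\beta=\{v\}$, $\alpha=\{v,w\}$, with $v$ only incident to the single edge $\alpha$), for then $V_{K_i}\subsetneq V_{K_{i+1}}$ and $\mu_{i+1}\subsetneq\mu_i$, and the two duals $K_{i+1}^{\mu_{i+1}}$ and $K_i^{\mu_i}$ are taken with respect to different ``native'' vertex sets while still sitting inside $\Delta(V_K)$. Fortunately the derivation of the displayed formula above never used that the vertex set of either $K_i$ or $K_{i+1}$ agreed with $V_K$, so the one-step argument is uniform in this respect and no separate case analysis is required.
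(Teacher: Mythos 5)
Your argument is correct, and it is worth noting that the paper itself gives no proof of this lemma: it is stated as ``a rewriting of Lemma 2.4 of \cite{MR}'' and simply cited. Your proof is the standard one and supplies exactly what that reference contains: since $V_L\cup V_\tau=V_K$, both $K^*$ and $L^\tau$ are duals with respect to the single ground set $V_K$, complementation $\sigma\mapsto\sigma^{c_{V_K}}$ is an inclusion-reversing involution, and an elementary collapse removing $\{\alpha,\beta\}$ dualizes to the elementary expansion adjoining $\{\beta^c,\alpha^c\}$; your verification that $\beta^c$ is principal and $\alpha^c$ is free in $K_i^{\mu_i}$ is right. Two small points you should tighten. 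First, the converse is not quite ``literally the same calculation'': an arbitrary expansion $K^*\nearrow L^\tau$ passes through intermediate complexes $C_j$ that are not given to you as duals, so you must first observe that the chain is increasing and bounded by $L^\tau\subseteq\Delta(V_K)$, hence each $C_j$ is $D_j^{*_{V_K}}$ for a unique complex $D_j$ with $L\subseteq D_j\subseteq K$ (duality with respect to the fixed ground set being an involution on subcomplexes of $\Delta(V_K)$); only then does the one-step computation dualize each elementary expansion to an elementary collapse. Second, there is a degenerate case when $\alpha=\Delta(V_K)$, where $\alpha^c=\emptyset$ and $K^*$ is the void complex; this is governed by the paper's conventions on the empty simplex and deserves a sentence, but it does not affect the substance. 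The ``in particular'' part is fine, using Lemma \ref{lema formula central}(4) together with the remark following it that covers $\tau=\emptyset$.
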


We are now able to give an alternative proof of Dong's and Santos-Sturmfels' original results.

\begin{teo}[Dong, Santos-Sturmfels]\label{teo dong} If $B\neq\Delta^d$ is a combinatorial $d$-ball then $B^{\tau}$ is contractible. If $S$ is a combinatorial $d$-sphere then $S^{\tau}$ is homotopy equivalent to a sphere.\end{teo}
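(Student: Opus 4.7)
The plan is to reduce the theorem to the case $\tau=\emptyset$ and then treat balls and spheres separately, using the ball result as input for an inductive argument on spheres. Since $B\neq\Delta^d$ and $S$ (being a combinatorial $d$-sphere with $d\geq 0$) is not a simplex, Lemma \ref{lema formula central}\eqref{item K tau es suspension de K} gives $B^\tau\simeq \Sigma^t B^*$ and $S^\tau\simeq \Sigma^t S^*$ for some $t\geq 0$. Joining with the nonempty complex $\partial\Delta^t$ sends contractible spaces to contractible spaces and $k$-spheres to $(t+k)$-spheres, so it suffices to show that $B^*$ is contractible and $S^*$ is homotopy equivalent to a sphere.

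For the ball case, Whitehead's theorem ensures $B$ is collapsible, so $B\searrow v$ for some $v\in V_B$. By Lemma \ref{remark MiRo}, $B^*\nearrow v^\rho$ with $\rho=\Delta(V_B-\{v\})$, and since $B\neq\Delta^d$ the simplex $\rho$ is nonempty. Formula \eqref{eq formula central} then reduces to $v^\rho=v\ast\partial\rho$, a cone and therefore contractible. Since expansions preserve homotopy type, $B^*$ is contractible.

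For the sphere case I would induct on $d$. The base case is $S=\partial\Delta^{d+1}$, the unique $d$-sphere with only $d+2$ vertices (covering in particular $d=0$), where $S^*=\{\emptyset\}=S^{-1}$ by convention. For $|V_S|\geq d+3$, pick any $v\in V_S$. The star $st(v,S)=v\ast lk(v,S)$ is a combinatorial $d$-ball (cone on the $(d-1)$-sphere $lk(v,S)$), and Newman's theorem gives that $S-v=\overline{S-st(v,S)}$ is a combinatorial $d$-ball with at least $d+2$ vertices, hence $\neq\Delta^d$. The ball case then yields $(S-v)^*$ contractible, and by Lemma \ref{lema link y deletion dual}\eqref{item la que vale siempre} we identify $lk(v,S^*)=(S-v)^*$, so $lk(v,S^*)$ is contractible. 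Writing $S^*=v\ast lk(v,S^*)\cup (S^*-v)$ with intersection $lk(v,S^*)$ and applying Lemma \ref{lema dos contractiles y S1}\eqref{item AcapB y B contractible K=A} to the cone $v\ast lk(v,S^*)$ and the contractible intersection, we conclude $S^*\simeq S^*-v$. Finally, Lemma \ref{lema link y deletion dual}\eqref{item B*-v suspension of link*} gives $S^*-v\simeq\Sigma^t lk(v,S)^*$ for some $t\geq 0$; by the inductive hypothesis applied to the $(d-1)$-sphere $lk(v,S)$, the dual $lk(v,S)^*$ is homotopy equivalent to a sphere, and suspension preserves this, so $S^*$ is too.

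The main obstacle is ensuring the hypotheses of the cited lemmas hold at each step of the sphere induction: namely that $S-v\neq\Delta^d$ (needed to invoke the ball case and obtain contractibility of $lk(v,S^*)$), and that $v$ is not isolated in $S$ and $lk(v,S)$ is not a simplex (needed for Lemma \ref{lema link y deletion dual}\eqref{item B*-v suspension of link*}). These conditions fail precisely when $S=\partial\Delta^{d+1}$ (and in the very low dimension $d=0$), which is why this case must be segregated as the base of the induction rather than handled by the general argument.
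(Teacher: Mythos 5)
Your reduction to $\tau=\emptyset$ and your entire sphere argument are fine --- the sphere part is in fact exactly the paper's proof (delete a vertex $v$, identify $lk(v,S^*)=(S-v)^*$, use Newman's theorem plus the ball case to see it is contractible, conclude $S^*\simeq S^*-v\simeq\Sigma^t lk(v,S)^*$ and induct). The problem is the ball case, on which everything else rests. You assert that Whitehead's theorem gives a simplicial collapse $B\searrow v$ and feed this into Lemma \ref{remark MiRo}. But Whitehead's theorem, as the paper states it and as it is true, only says that a combinatorial ball is collapsible in the paper's sense: \emph{some subdivision} of $B$ collapses to a vertex. It is false that every combinatorial $d$-ball collapses simplicially to a point in its given triangulation; Bing's triangulated $3$-balls containing a knotted spanning arc are the standard counterexamples. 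Nor can you repair this by subdividing first, because Lemma \ref{remark MiRo} relates $K$ and $K^*$ for a fixed triangulation: the Alexander dual of a subdivision of $B$ lives on a completely different vertex set and bears no controlled relation to $B^*$. So the step ``$B^*\nearrow v^{\rho}$'' is unjustified, and with it the contractibility of $B^*$ and, downstream, the contractibility of $lk(v,S^*)=(S-v)^*$ in the sphere induction.

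The paper avoids collapsibility entirely in the ball case and argues locally, by induction on $d$: when $|V_B|=d+2$, Lemma \ref{lema d+2 is the link} gives $B^*\simeq\Sigma^t lk(u,B)^*$ for a vertex $u\notin B^*$, with $lk(u,B)$ a lower-dimensional ball; when $|V_B|\geq d+3$, one picks $v\in\partial B$, shows $B^*-v$ is contractible via Lemma \ref{lema link y deletion dual} \eqref{item B*-v suspension of link*} or \eqref{item B*-v contractible} and induction on $lk(v,B)$, and then concludes from $B^*=(B^*-v)+st(v,B^*)$ being acyclic (Alexander duality) and Lemma \ref{lema dos contractiles y S1} \eqref{item K=A+B contractible} that $B^*$ is contractible. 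You need to replace your ball argument with something of this inductive, link-and-deletion nature (or another argument that does not presuppose simplicial collapsibility of $B$).
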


\begin{proof} By Lemma \ref{lema formula central} \eqref{item K tau es suspension de K} it suffices to prove the result for $\tau=\emptyset$. We first prove it for a combinatorial ball $B$ by induction on $d\geq 1$. If $d=1$ then $B$ collapses to a $1$-ball with two edges (whose Alexander dual is a vertex) and the result follows from Lemma \ref{remark MiRo}. Now, let $d\geq 2$. If $|V_B|=d+2$, take $u\notin B^*$. If $lk(u,B)$ is not a simplex, Lemmas \ref{lema d+2 is the link} and \ref{lema formula central} \eqref{item K tau es suspension de K} imply $B^*\simeq\Sigma^t lk(u,B)^*$, which is contractible by induction since $lk(u,B)$ is a ball. If $lk(u,B)$ is a simplex, the result follows immediately. 

Suppose $|V_B|\geq d+3$ and let $v\in\partial B$. Now, $B^*-v$ is contractible by Lemma \ref{lema link y deletion dual} \eqref{item B*-v contractible} or Lemma \ref{lema link y deletion dual} \eqref{item B*-v suspension of link*} and induction. Since $B^*=B^*-v+st(v,B^*)$ is acyclic by Alexander duality then $B^*$ is contractible by Lemma \ref{lema dos contractiles y S1} \eqref{item K=A+B contractible}.

Now let $S$ be a combinatorial sphere. We may assume that $|V_S|\geq d+3$. We proceed again by induction on $d$. Let $d\geq 1$ and $v\in S$. By Lemma \ref{lema link y deletion dual} \eqref{item la que vale siempre}, $lk(v,S^*)=(S-v)^*$ which is contractible by Newman's theorem and the previous case. Since $S^*=S^*-v+st(v,S^*)$ where $(S^*-v)\cap st(v,S^*)=lk(v,S^*)$ is contractible, then $S\simeq S^*-v\simeq\Sigma^t lk(v,S)^*$ by Lemma \ref{lema dos contractiles y S1} \eqref{item AcapB y B contractible K=A} and Lemma \ref{lema link y deletion dual} \eqref{item B*-v suspension of link*}. The result now follows by the inductive hypothesis on the ($d-1$)-sphere $lk(v,S)$.\end{proof}

Note that this theorem actually holds for simplicial (not necessarily combinatorial) balls and spheres. The more general formulation follows from this result using an argument of Dong \cite{Dong}, since the non-trivial cases turn out to be polytopal which, in turn, are combinatorial (see also \cite{ES, Mani}).

In order to prove the generalization of Santos and Sturmfels' result, we first need to characterize the $d$-homogeneous subcomplex of an $NH$-ball of dimension $d$ and $d+3$ vertices. We need the following known result on manifolds with few vertices.

\begin{teo}[\cite{BK}, Theorem A]\label{kunel} Let $M$ be a boundaryless combinatorial $d$-manifold with $n$ vertices. If $$n<3\left\lceil\frac{d}{2}\right\rceil+3$$ then $M$ is a combinatorial $d$-sphere. Also, if $d=2$ and $n=6$ then $M$ is either PL-homeomorphic to a $2$-sphere or combinatorially equivalent to the projective plane $\mathbb{R}P^2$.\end{teo}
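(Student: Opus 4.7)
The plan is to argue by induction on $d$, proving the contrapositive: any closed combinatorial $d$-manifold $M$ with $n$ vertices that is \emph{not} a combinatorial $d$-sphere satisfies $n\ge 3\lceil d/2\rceil+3$. Throughout write $k=\lceil d/2\rceil$. The base cases $d\le 2$ are handled by direct face-counting. For a closed triangulated surface one has $3f_2=2f_1$ and $\chi=f_0-f_1+f_2$, hence $f_1=3(n-\chi)$, so the constraint $f_1\le\binom{n}{2}$ forces $n\ge 6$ whenever $\chi\le 1$; in particular, $n\le 5$ leaves only $\chi=2$, and therefore $M\cong S^2$.

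For the inductive step I would combine two ingredients. First, the Dehn--Sommerville relations for closed triangulated $d$-manifolds together with the Generalized Lower Bound Theorem (Barnette, Kalai) control the face numbers $f_1,\ldots,f_{k-1}$ of $M$ in terms of $n$ and the Betti numbers. Because $M$ is not PL-equivalent to a sphere, some middle Betti number must be nonzero, pushing these face numbers upward. Second, one applies the inductive hypothesis to each vertex link $lk(v,M)$, a closed combinatorial $(d-1)$-manifold: whenever $lk(v,M)$ fails to be a $(d-1)$-sphere its vertex count is bounded below by $3\lceil (d-1)/2\rceil+3$, from which an inequality for $f_0(M)$ is extracted by summation. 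The combination of these two streams yields $n\ge 3k+3$.

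A complementary geometric obstruction is van Kampen--Flores: the $k$-skeleton of $\Delta^{2k+2}$ does not PL-embed into $\mathbb{R}^{2k}$. If $n\le 3k+2$ and $M$ were already $k$-neighborly (meaning every $k$-subset of $V_M$ spans a face of $M$), then a suitable $(2k+3)$-subset of vertices would realize the forbidden skeleton inside $M\subseteq\mathbb{R}^d\subseteq\mathbb{R}^{2k}$; since this cannot happen unless $M$ is itself a sphere, one recovers the desired bound in the neighborly regime, which is exactly the regime where the face-number inequalities of the previous step become tight.

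For the borderline assertion $d=2,\ n=6$, the same Euler computation gives $f_1=18-3\chi\le 15$, forcing $\chi\in\{1,2\}$. The value $\chi=2$ corresponds to a combinatorial $2$-sphere (for instance the boundary of the octahedron), while $\chi=1$ forces $f_1=15$ and $f_2=10$, and up to combinatorial equivalence the unique such triangulation is the hemi-icosahedron model of $\mathbb{R}P^2$. The main obstacle in the general argument is the sharp face-number step: turning the qualitative hypothesis ``$M$ is not a sphere'' into a quantitative lower bound on $f_1,\ldots,f_{k-1}$ requires the full strength of the rigidity-based LBT and a delicate inductive tracking of links. This technical core is what I would import from the Brehm--K\"uhnel paper rather than reprove from scratch, since the present excerpt only needs the statement in order to quote it later.
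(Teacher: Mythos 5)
This statement is not proved in the paper at all: it is Theorem A of Brehm and K\"uhnel, quoted with the citation \cite{BK} and used as a black box in Corollary \ref{kunelcoro} and Proposition \ref{prop d-homogeneous part of balls and spheres}. So there is no internal argument to compare against, and your closing remark --- that the technical core should be imported rather than reproved --- is exactly what the authors do. Your base case and the $d=2$, $n=6$ discussion via $f_1=3(f_0-\chi)\le\binom{n}{2}$ are correct and standard.

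The sketched inductive step, however, contains a genuine error. Its pivot is the claim that ``because $M$ is not PL-equivalent to a sphere, some middle Betti number must be nonzero.'' This is false: homology spheres (the Poincar\'e $3$-sphere being the classical example) have the Betti numbers of $S^d$ without being spheres, so no face-number inequality driven by Betti numbers --- Dehn--Sommerville, the Generalized Lower Bound Theorem, or any combination --- can detect them. Already for $d=3$ the theorem asserts that $n\le 7$ forces $M\cong S^3$, and your argument yields no lower bound at all on the vertex count of a triangulated Poincar\'e sphere. Moreover, even in the cases where homology does appear, passing from ``simply connected with the homology of $S^d$'' to ``combinatorial $d$-sphere'' would require the PL Poincar\'e theorem, which is unavailable in dimension $4$ (and, in 1987, in dimension $3$). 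The van Kampen--Flores paragraph is also misapplied: a closed $d$-manifold does not embed in $\mathbb{R}^d$, so the chain $M\subseteq\mathbb{R}^d\subseteq\mathbb{R}^{2k}$ is vacuous; the correct use of that obstruction in this circle of ideas is that the $k$-skeleton of $\Delta^{2k+2}$ does not embed in $S^{2k}$, which constrains neighborly triangulations of \emph{spheres}, not of non-spheres. The actual Brehm--K\"uhnel argument avoids Betti numbers of $M$ entirely: it counts critical points of generic (regular simplexwise linear) functions and exploits a complementarity lemma relating the induced subcomplexes on $W$ and on $V_M\setminus W$, concluding that under the vertex bound $M$ minus a facet is collapsible --- whence $M$ is a combinatorial sphere by Whitehead's theorem, with no appeal to the Poincar\'e conjecture. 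Citing the result, as the paper does, is the right move; the reconstruction as written does not prove it.
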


The following is an immediate consequence of this result.

\begin{coro} \label{kunelcoro}

 Let $M$ be a combinatorial $d$-manifold with boundary with $n$ vertices. If $$n<min\left\{3\left\lceil\frac{d-1}{2}\right\rceil+3,3\left\lceil\frac{d}{2}\right\rceil+2\right\}$$ then $M$ is a combinatorial $d$-ball. The result is also valid if $d=3$ and $n=6$.

\begin{proof} By Theorem \ref{kunel}  $\partial M$ is a combinatorial ($d-1$)-sphere. This  includes the case $d=3$ and $n=6$ since $\mathbb{R}P^2$ cannot be the boundary of a compact manifold. Take $u\notin M$  and build $N=M+u\ast\partial M$ where $M\cap u\ast\partial M=\partial M$. It is easy to see that $N$ is a boundaryless combinatorial $d$-manifold. Now, since $|V_N|<3\lceil\frac{d}{2}\rceil+3$ then $N$ is a combinatorial $d$-sphere by Theorem \ref{kunel} and $M=\overline{N-u\ast\partial M}$ is a combinatorial $d$-ball by Newman's theorem.\end{proof}

\end{coro}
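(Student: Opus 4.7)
The plan is to reduce the statement to the closed-manifold case handled by Theorem \ref{kunel}, via the standard trick of capping off the boundary by a cone.

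First I would apply Theorem \ref{kunel} to $\partial M$. Since $\partial M$ is a boundaryless combinatorial $(d-1)$-manifold on at most $n$ vertices and $n < 3\lceil (d-1)/2\rceil + 3$, the theorem forces $\partial M$ to be a combinatorial $(d-1)$-sphere. For the borderline case $d=3$, $n=6$, Theorem \ref{kunel} still leaves open the possibility that $\partial M$ is the real projective plane $\mathbb{R}P^2$; I would rule this out by invoking the classical fact that $\mathbb{R}P^2$ is not the boundary of any compact $3$-manifold (for instance because its $\mathbb{Z}_2$-Euler characteristic is odd), so again $\partial M$ must be a $2$-sphere.

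Next I would introduce a fresh vertex $u \notin V_M$ and form
$$N = M + u\ast\partial M, \qquad M\cap u\ast\partial M = \partial M.$$
The star $u\ast\partial M$ is a combinatorial $d$-ball (cone on a $(d-1)$-sphere) glued to $M$ along the combinatorial $(d-1)$-sphere $\partial M$, so one checks that $lk(w,N)$ is a combinatorial $(d-1)$-sphere for every vertex $w$ of $N$: for $w=u$ the link is $\partial M$; for $w\in V_M\setminus V_{\partial M}$ the link is unchanged from $M$ and was already a sphere; and for $w\in V_{\partial M}$ the link is $lk(w,M)\cup u\ast lk(w,\partial M)$, a ball glued to a ball along their common boundary sphere $lk(w,\partial M)$, hence a sphere by Newman. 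Thus $N$ is a boundaryless combinatorial $d$-manifold on $n+1$ vertices.

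Since the hypothesis $n < 3\lceil d/2\rceil + 2$ gives $n+1 < 3\lceil d/2\rceil + 3$ (and in the exceptional case $d=3$, $n=6$ we have $n+1 = 7 < 9 = 3\lceil 3/2\rceil + 3$), a second application of Theorem \ref{kunel} shows that $N$ is a combinatorial $d$-sphere. Finally, $u\ast\partial M \subset N$ is a combinatorial $d$-ball, so by Newman's theorem (recalled in the preliminaries) its complement $\overline{N - u\ast\partial M} = M$ is a combinatorial $d$-ball, as required. The only nontrivial step is really the link verification for $N$ and the $\mathbb{R}P^2$-exclusion when $(d,n)=(3,6)$; everything else is a direct bookkeeping of the two numerical bounds so that the Kühnel inequality applies to both $\partial M$ and $N$.
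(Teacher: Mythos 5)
Your proposal is correct and follows essentially the same route as the paper: cap off $\partial M$ (shown to be a sphere via Theorem \ref{kunel}, excluding $\mathbb{R}P^2$ in the case $d=3$, $n=6$ because it does not bound) with a cone $u\ast\partial M$, apply Theorem \ref{kunel} again to the resulting closed manifold $N$ on $n+1$ vertices, and conclude by Newman's theorem. You merely spell out the link verification for $N$ and the numerical bookkeeping that the paper leaves implicit.
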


\begin{prop}		\label{prop d-homogeneous part of balls and spheres}

	Let $B$ be an $NH$-ball of dimension $d$ and $n\leq d+3$ vertices. Then, the $d$-homogeneous subcomplex $Y^d\subset B$ is a combinatorial $d$-ball. 

	\begin{proof} Since $B$ is acyclic,  by Theorem \ref{teo recuento nh-variedades} \eqref{item nh-pseudo}  $Y^d$ is a weak $d$-pseudomanifold with boundary. We may assume $d\geq 2$ and $|V_{Y^d}|=d+3$ since the cases $d=0,1$ and $|V_{Y^d}|=d+1$ are trivial and,  if $|V_{Y^d}|=d+2$,  $Y^d$ is an elementary starring of a simplex by \cite[Lemma 6]{Mani}. Note that $Y^d$ is necessarily connected. We first prove that $Y^d$ is a combinatorial manifold. Let $v\in Y^d$. By the same reasoning as above we may assume $|V_{lk(v,B)}|=d+2$. If $lk(v,B)$ is an $NH$-ball then $lk(v,Y^d)$ is a combinatorial ($d-1$)-ball by inductive hypothesis since $lk(v,Y^d)$ is the ($d-1$)-homogeneous part of $lk(v,B)$.	Suppose $lk(v,B)$ is an $NH$-sphere. If $\dim_h(lk(v,B))=d-1$ then $lk(v,B)=lk(v,Y^d)$ is a combinatorial ($d-1$)-sphere by Proposition \ref{prop d=k implies homogeneous}. Otherwise, $lk(v,Y^d)$ is the ($d-1$)-homogeneous part of the $NH$-ball in any decomposition of $lk(v,B)$ and the result follows again by induction. This shows that $Y^d$ is a combinatorial $d$-manifold.

Suppose $d=2$. Note that $Y^d$ is $\mathbb{Z}_2$-acyclic since it is connected, it has non-empty boundary and it is contained in the acyclic complex $B$. On the other hand, any   $\mathbb{Z}_2$-acyclic complex with $5$ vertices is collapsible (see for example \cite[Theorem 1]{BD2}). 

For $d\geq 3$, $Y^d$ is a combinatorial $d$-ball by Corollary \ref{kunelcoro}.\end{proof}

\end{prop}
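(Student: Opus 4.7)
The plan is to induct on $d$, first establishing that $Y^d$ is a combinatorial $d$-manifold and then, using vertex-count bounds, upgrading this to a ball. The trivial cases---$d\in\{0,1\}$, $|V_{Y^d}|\le d+1$ (so $Y^d$ is a simplex), or $|V_{Y^d}|=d+2$ (where $Y^d$ is an elementary starring of a simplex by \cite[Lemma 6]{Mani})---reduce the problem to $d\ge 2$ and $|V_{Y^d}|=d+3$. Since $B$ is an $NH$-pseudomanifold by Theorem \ref{teo recuento nh-variedades}\eqref{item nh-pseudo}, every $(d-1)$-face of a $d$-simplex is a ridge of $B$ whose $0$-dimensional link is either a point or a $0$-sphere; such a face therefore lies in at most two $d$-simplices, making $Y^d$ a weak $d$-pseudomanifold with boundary. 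Strong connectedness of the principal simplices of $B$ then yields connectedness of $Y^d$.

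For the manifold step I would examine $lk(v,Y^d)$ for each $v\in V_{Y^d}$. This equals the $(d-1)$-homogeneous subcomplex of $lk(v,B)$, which by Theorem \ref{teo recuento nh-variedades}\eqref{item link is bola o esfera} is an $NH$-ball or $NH$-sphere on at most $(d-1)+3$ vertices. If $lk(v,B)$ is an $NH$-ball, the inductive hypothesis directly gives that $lk(v,Y^d)$ is a combinatorial $(d-1)$-ball. If $lk(v,B)$ is an $NH$-sphere with $\dim_h(lk(v,B))=d-1$, Proposition \ref{prop d=k implies homogeneous} shows that $lk(v,B)$ is already a combinatorial $(d-1)$-sphere and coincides with $lk(v,Y^d)$. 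Otherwise a decomposition $lk(v,B)=B'+L$ has $\dim L<d-1$, so $lk(v,Y^d)$ equals the $(d-1)$-homogeneous part of the $NH$-ball $B'$, and induction applies. In all three branches the link is a combinatorial $(d-1)$-ball or sphere, so $Y^d$ is a combinatorial $d$-manifold.

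To finish, I upgrade the manifold $Y^d$---which is connected, has non-empty boundary, and has $d+3$ vertices---to a ball. For $d\ge 3$, the count $|V_{Y^d}|=d+3$ fits the bound of Corollary \ref{kunelcoro} (with the corollary's explicit $d=3$, $n=6$ clause covering the tight instance), so $Y^d$ is a combinatorial $d$-ball directly. The case $d=2$ is the main obstacle, since Corollary \ref{kunelcoro} does not apply and, a priori, a connected $2$-manifold on five vertices with non-empty boundary could be a M\"obius band. I overcome this by proving $Y^2$ is $\mathbb{Z}_2$-acyclic: $H_2(Y^2;\mathbb{Z}_2)=0$ because $Y^2$ is a connected $2$-manifold with non-empty boundary, while for any $1$-cycle $c$ in $Y^2$, acyclicity of $B$ provides a $2$-chain $e\in C_2(B;\mathbb{Z}_2)$ with $\partial e=c$; since $Y^2$ contains every $2$-simplex of $B$ we actually have $e\in C_2(Y^2;\mathbb{Z}_2)$, so $c$ bounds in $Y^2$ and $H_1(Y^2;\mathbb{Z}_2)=0$. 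An appeal to the classical fact that any $\mathbb{Z}_2$-acyclic simplicial complex on five vertices is collapsible (e.g.\ \cite[Theorem 1]{BD2}) then concludes that $Y^2$ is a combinatorial $2$-ball.
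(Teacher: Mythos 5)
Your proof follows the paper's argument essentially verbatim: the same reduction to $d\ge 2$ and $|V_{Y^d}|=d+3$ via \cite[Lemma 6]{Mani}, the same induction on vertex links (split into the three branches $NH$-ball, $NH$-sphere of homotopy dimension $d-1$, and $NH$-sphere of smaller homotopy dimension) to show $Y^d$ is a combinatorial manifold, and the same endgame ($\mathbb{Z}_2$-acyclicity plus \cite[Theorem 1]{BD2} for $d=2$, Corollary \ref{kunelcoro} for $d\ge 3$). The only wrinkle is your appeal to strong connectedness of the principal simplices of $B$ to deduce connectedness of $Y^d$ --- that chain may pass through lower-dimensional principal simplices lying outside $Y^d$ --- but connectedness is immediate anyway, since any two $d$-simplices on $d+3$ vertices share at least $d-1\ge 1$ vertices.
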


\begin{prop}\label{propoballd+3tod-2} Any $NH$-ball $B$ of dimension $d\geq 2$ and $d+3$ vertices collapses to a complex of dimension $d-2$.\end{prop}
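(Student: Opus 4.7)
The plan is to use Proposition~\ref{prop d-homogeneous part of balls and spheres} to identify the $d$-homogeneous subcomplex $Y^d\subset B$ as a combinatorial $d$-ball with at most $d+3$ vertices, and then to exhibit an \emph{apex} vertex $v\in V_{Y^d}$ lying in every $d$-simplex of $Y^d$, so that $Y^d=v\ast lk(v,Y^d)$. For $|V_{Y^d}|\leq d+2$ this is immediate from the classification of such $d$-balls as $\Delta^d$ or an elementary starring of a $d$-simplex (\cite[Lemma~6]{Mani}); for $|V_{Y^d}|=d+3$ one verifies the existence of an apex by a counting argument on the pair of vertices missing from each $d$-simplex, together with the constraint that $Y^d$ be a combinatorial manifold (a pair of excluded vertices must cover $V_{Y^d}$ to spoil every candidate apex, and such configurations are ruled out by the manifold condition at the vertices, as already illustrated in low-dimensional cases).

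Given the apex $v$, the plan is to perform two successive rounds of elementary collapses in $B$. In the first round, for each $(d-1)$-simplex $\eta_i\in lk(v,Y^d)$ (the face of some $d$-simplex $v\ast\eta_i$ opposite to $v$), the pair $(v\ast\eta_i,\eta_i)$ is an elementary collapse in $B$: the apex property shows that $v\ast\eta_i$ is the unique $d$-simplex of $Y^d$ containing $\eta_i$, and dimensional considerations preclude any coface of $\eta_i$ in $Z:=\overline{B-Y^d}$, since $\dim Z<d$ and $\eta_i$ is not itself principal in $B$. After this round every $d$-simplex of $B$ has been removed. In the second round, for each $(d-2)$-simplex $\eta'_j\in lk(v,Y^d)$, its cofaces in $Y^d$ have been reduced to $v\ast\eta'_j$ alone; provided $\eta'_j$ has no coface in $Z$ either, the pair $(v\ast\eta'_j,\eta'_j)$ is collapsible, which eliminates every $(d-1)$-simplex contributed by $Y^d$ and leaves a subcomplex of dimension at most $d-2$.

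The main obstacle will be the possible interaction with $Z$ in the second round: a $(d-2)$-face $\eta'_j$ could be a face of a principal $(d-1)$-simplex $\rho\in Z$, preventing it from being free in $B$. The plan is to handle such obstructions by a preliminary collapse of these $\rho$'s within $Z$, relying on the strong constraint $|V_B|=d+3$ (so $V_Z\subseteq V_B$ has at most $d+3$ vertices) to severely limit the possible configurations of $Z$, together with the $NH$-manifold conditions at each vertex to control how $Z$ attaches to $Y^d$. A careful bookkeeping of dimensions then confirms that after all these collapses $B$ has been reduced to a complex of dimension at most $d-2$, establishing the proposition.
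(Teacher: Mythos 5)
Your proof has two genuine gaps, and the first one is fatal. The apex vertex you want to extract from $Y^d$ need not exist when $|V_{Y^d}|=d+3$. Concretely, take the combinatorial $2$-ball with facets $124$, $134$, $123$, $235$ (star the triangle $234$ at an interior vertex $1$, then attach the triangle $235$ along the edge $23$): this is a triangulated disk, hence a combinatorial $2$-ball, with $5=d+3$ vertices, and no vertex lies in all four facets. The excluded pairs are $\{3,5\},\{2,5\},\{4,5\},\{1,4\}$, which do cover $V_{Y^d}$, and nothing in the manifold condition forbids this configuration --- so the counting argument you sketch cannot close. Since this complex is itself an $NH$-ball (it is a collapsible combinatorial manifold), your strategy already fails on it. The paper avoids this by using a strictly weaker structural fact: a combinatorial $d$-ball with at most $d+3$ vertices is \emph{vertex decomposable} by \cite[5.7]{KK}, hence collapsible without subdivision, and the collapses can be arranged in decreasing order of dimension so that removing the $d$- and $(d-1)$-simplices of $Y^d$ leaves something of dimension $d-2$. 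Vertex decomposability is the correct substitute for your apex.

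The second gap is that the interaction with $Z=\overline{B-Y^d}$ --- i.e., the principal $(d-1)$-simplices of $B$ --- is exactly the hard part of the proposition, and you only announce a ``plan'' to collapse them first. A priori the subcomplex they generate could have no free $(d-2)$-faces whatsoever (each $(d-2)$-face being shared with $Y^d$ or with another principal $(d-1)$-simplex), and neither the vertex bound nor ``careful bookkeeping'' rules this out by itself. The paper's proof resolves this with a homological argument: if the greedy collapse of the principal $(d-1)$-simplices gets stuck on a nonempty subcomplex $K$, then $K$ is a weak $(d-1)$-pseudomanifold by Theorem \ref{teo recuento nh-variedades} \eqref{item nh-pseudo} with $\partial K\subset Y^d$, so the formal sum of its facets gives a nontrivial class in $H_{d-1}(B,Y^d)$; but $B$ and $Y^d$ are both contractible, so $H_{d-1}(B,Y^d)=0$, and since the simplices of $K$ are maximal in $B$ they cannot be killed by a $d$-chain --- a contradiction. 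You need this step (or an equivalent one) before any collapsing of $Y^d$ can begin.
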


\begin{proof} We first show that all the principal ($d-1$)-simplices in $B$ can be collapsed. Let $Y^{d-1}$ be the subcomplex of $B$ generated by the principal ($d-1$)-simplices and let $Y^d$ be the $d$-homogeneous part of $B$. By the previous proposition, $Y^d$ is a combinatorial ball. Suppose that not all the ($d-1$)-simplices in $Y^{d-1}$ can be collapsed. Let $K$ be the subcomplex of $Y^{d-1}$ generated by these ($d-1$)-simplices. By assumption, $K\neq \emptyset$. Note that $K$ is a weak ($d-1$)-pseudomanifold with boundary by Theorem \ref{teo recuento nh-variedades} \eqref{item nh-pseudo} but it has no free ($d-2$)-faces in $B$. Then $\partial K\subset Y^d$. Therefore, if $c$ denotes the formal sum of the ($d-1$)-simplices of $K$ then $c\in H_{d-1}(B,Y^d)$. Since $B$ and $Y^d$ are contractible then $H_{d-1}(B,Y^d)=0$. This implies that $c$ is not a generating cycle, which is a contradiction since the ($d-1$)-simplices of $c$ are maximal. This shows that we can collapse all the principal ($d-1$)-simplices in $B$. On the other hand, since $Y^d$ is a combinatorial $d$-ball with $d+3$ vertices or less, it is vertex decomposable by \cite[5.7]{KK}. In particular $Y^d$  is collapsible with no need of further subdivision. Then we can make the collapses in order of decreasing dimension and collapse the $d$-simplices and the ($d-1$)-simplices of $Y^d$ afterwards to obtain a ($d-2$)-dimensional complex.\end{proof}

\begin{coro}\label{corocollapsod-3} Any $NH$-ball of dimension $d\geq 3$ and $d+2$ vertices collapses to a complex of dimension $d-3$.\end{coro}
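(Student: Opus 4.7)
My plan is to refine the argument of Proposition \ref{propoballd+3tod-2} by extending the collapse one dimension further. The key structural observation is that the $d$-homogeneous part $Y^d\subset B$ is a combinatorial $d$-ball by Proposition \ref{prop d-homogeneous part of balls and spheres}, and since $|V_{Y^d}|\leq d+2$, \cite[Lemma 6]{Mani} forces $Y^d$ to be either $\Delta^d$ or the elementary starring $v\ast\partial\Delta^d$. Both options are simplicial cones, so $Y^d$ admits a sequence of elementary collapses down to a single vertex without any subdivision.

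Following the scheme of Proposition \ref{propoballd+3tod-2}, I would first collapse all principal $(d-1)$-simplices of $B$: if some family of them admitted no elementary collapse, its formal sum would be a relative cycle in $H_{d-1}(B,Y^d)=0$, contradicting that it is generating. I would then extend the same reasoning one dimension lower. Let $K$ be the subcomplex generated by the principal $(d-2)$-simplices of $B$ that lack a free $(d-3)$-face at this stage. Each such $(d-3)$-face is a ridge of $B$, so by Theorem \ref{teo recuento nh-variedades}\eqref{item nh-pseudo} its link is either a point (which would immediately produce a free face, contradicting the hypothesis) or an $NH$-sphere of homotopy dimension $0$. The second case, together with the fact that principal $(d-1)$-simplices have already been removed and that any $d$-simplex through the ridge lies in $Y^d$, forces the ridge to be contained in exactly two principal simplices of the residual complex; hence $K$ is a weak $(d-2)$-pseudomanifold with boundary inside $Y^d$. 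The mod-$2$ sum of its top simplices then represents a nontrivial class in $H_{d-2}(B,Y^d)=0$, yielding the same style of contradiction.

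After these two rounds of collapses, every principal simplex outside $Y^d$ has dimension at most $d-3$. Collapsing the cone $Y^d$ down to its apex inside $B$ then completes the argument: the necessary free faces of the $d$-, $(d-1)$- and $(d-2)$-dimensional simplices of $Y^d$ remain free in the ambient complex, because the surviving principals outside $Y^d$ are of dimension $\leq d-3$ and therefore cannot contain them. The result is a subcomplex of $B$ of dimension at most $d-3$.

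The hard part I anticipate is the second homological step: one must verify carefully that, after Step 1, the ``homotopy-dimension-$0$ sphere'' alternative really does assign to each ridge of $K$ exactly two principal $(d-2)$-simplices, and that the pseudomanifold boundary $\partial K$ lands inside $Y^d$ rather than in the portion of $B$ already processed. This requires tracking precisely which simplices have been removed in Step 1 and how the link structure behaves under those removals.
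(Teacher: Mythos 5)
Your plan has a genuine gap, and it occurs exactly where the paper abandons the strategy of Proposition \ref{propoballd+3tod-2}: the $(d-2)$-dimensional \emph{faces} of the principal $(d-1)$-simplices. An elementary collapse of a principal $(d-1)$-simplex $\tau$ through one free $(d-2)$-face removes only $\tau$ and that one face; the remaining $(d-2)$-faces of $\tau$ survive, and those not lying in $Y^d$ become principal $(d-2)$-simplices \emph{of the residual complex} while not being principal in $B$. They are therefore caught neither by your Step 3 (which, as written, treats only the $(d-2)$-simplices principal in $B$) nor by the final collapse of the cone $Y^d$ (they lie outside $Y^d$). Concretely, for $d=3$ take $B=\Delta(\{1,2,3,4\})+\Delta(\{3,4,5\})$: collapsing the principal triangle $\{3,4,5\}$ through the free edge $\{4,5\}$ leaves the edge $\{3,5\}$, which is principal in the residue but not in $B$ and not in $Y^3$, so your procedure terminates at a $1$-dimensional complex rather than a point. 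These leftover simplices also poison your last step, since a $(d-3)$-face of $Y^d$ that you need as a free face may be a face of such a surviving $(d-2)$-simplex. Extending the homological argument to these new principal simplices is not routine: Theorem \ref{teo recuento nh-variedades}\eqref{item nh-pseudo} controls links of ridges \emph{of $B$}, whereas after Step 2 you would need to control links in a residual complex that is no longer an $NH$-manifold, and the links there genuinely differ from the links in $B$. This cascading problem is precisely why reaching codimension $3$ is harder than codimension $2$.

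The paper avoids all of this by a different induction: since $|V_B|=d+2$, one writes $B=\Delta^d+st(u,B)$ for the single vertex $u$ outside a $d$-simplex, notes that $lk(u,B)$ is an $NH$-ball with $\dim(lk(u,B))\leq d-1$ and at most $d+1$ vertices, and applies the inductive hypothesis, Proposition \ref{propoballd+3tod-2}, or Corollary \ref{coro existence of spines} (according to the gap $|V_{lk(u,B)}|-\dim(lk(u,B))$) to collapse $lk(u,B)$, hence $st(u,B)=u\ast lk(u,B)$, to dimension $\leq d-3$, finishing with the collapses of $\Delta^d$ in decreasing dimension. If you want to keep your approach you would need an additional argument showing that, after Step 2, every surviving $(d-2)$-simplex outside $Y^d$ (principal in $B$ or not) can be collapsed; as it stands, that is the missing and essential piece.
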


\begin{proof} We proceed by induction on $d$. If $d=3$ then $B$ is collapsible since it is acyclic and has few vertices (see \cite[Theorem 1]{BD2}). Let $d\geq 4$ and write $B=\Delta^d+st(u,B)$ where $u\notin\Delta^d$. Now, $\Delta^d\cap st(u,B)=lk(u,B)\subset\Delta^d$ is an $NH$-ball since $B$ is one. Also, $\dim(lk(u,B))\leq d-1$ and $|V_{lk(u,B)}|\leq d+1$. Let $m=|V_{lk(u,B)}|-\dim(lk(u,B))$. If $m=1$ then $lk(u,B)$ is a simplex and $B\searrow\Delta^d\searrow 0$. For $m=2,3,4$ we use the inductive hypothesis, Proposition \ref{propoballd+3tod-2} or Corollary \ref{coro existence of spines} respectively to show that $lk(u,B)$ collapses to a complex of dimension  $\dim(lk(u,B))-(5-m)=|V_{lk(u,B)}|-m-(5-m)=|V_{lk(u,B)}|-5\leq d+1-5=d-4$. Therefore, $u\ast lk(u,B)=st(u,B)$ collapses to a complex of dimension $d-3$. Finally, if $m\geq 5$ then $\dim(lk(u,B))\leq|V_{lk(u,B)}|-5\leq d-4$ and $\dim(st(u,B))\leq d-3$. In any case we can collapse afterwards the $i$-simplices of $\Delta^d$ ($i=d,d-1,d-2$) in order of decreasing dimension to obtain a ($d-3$)-dimensional complex.\end{proof}

We are ready to prove now the first of our main results.

\begin{teo}\label{teo main for NH-balls} Let $B$ be an $NH$-ball and let $\tau$ be a simplex (possibly empty). Then, $B^{\tau}$ is contractible.\end{teo}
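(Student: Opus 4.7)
The plan is to reduce the problem to the case $\tau = \emptyset$ and then prove that $B^*$ is contractible by induction on $d = \dim B$. By Lemma \ref{lema formula central}(4), if $B$ is not a simplex then $B^{\tau} \simeq \Sigma^t B^*$ for some $t \geq 0$; since suspensions of contractible spaces are contractible, it suffices to show $B^*$ is contractible. If $B = \Delta^d$ and $\tau \neq \emptyset$, the formula gives $B^\tau = \partial\tau \ast B$, a join with a non-empty simplex, hence contractible. We may therefore assume $B \neq \Delta^d$ and address the contractibility of $B^*$.

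The base case $d = 1$ is immediate because a one-dimensional NH-ball is a combinatorial $1$-ball (a path), so the result reduces to Theorem \ref{teo dong}. For the inductive step with $d \geq 2$, we split by $|V_B|$. When $|V_B| = d+2$, Lemma \ref{lema de cantidad de vertices} gives $V_{B^*} \subsetneq V_B$, so we may pick $u \in V_B - V_{B^*}$. Then $u^c$ is a $d$-simplex of $B$, yielding the decomposition $B = u^c + u \ast lk(u, B)$ with intersection $lk(u, B)$. By Theorem \ref{teo recuento nh-variedades}(1), $lk(u, B)$ is either an NH-ball or an NH-sphere; were it the latter, Theorem \ref{Alexander for NH-manifolds}(2) would force $B$ itself to be an NH-sphere, contradicting the hypothesis. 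Hence $lk(u, B)$ is an NH-ball of dimension $< d$, and by Lemma \ref{lema d+2 is the link} we have $B^* = (lk(u, B))^{\tau_u}$ for $\tau_u = \Delta(V_B - V_{st(u, B)})$; the inductive hypothesis applied to the pair $(lk(u,B),\tau_u)$ gives contractibility.

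When $|V_B| \geq d+3$, we choose a vertex $v \in V_B$ with $lk(v, B)$ an NH-ball. Lemma \ref{lema link y deletion dual}(3)--(4) then shows that $B^* - v$ is either directly contractible (if $lk(v, B)$ is a simplex) or homotopy equivalent to $\Sigma^t lk(v, B)^*$, which is contractible by the inductive hypothesis. Writing $B^* = (B^* - v) + st(v, B^*)$ as the union of two contractible subcomplexes and noting that $B^*$ is acyclic by Alexander duality (since $B$ is contractible), Lemma \ref{lema dos contractiles y S1}(1) yields that $B^*$ is contractible.

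The main obstacle is guaranteeing, in the last case, the existence of a vertex $v$ with NH-ball link. In the classical setting every boundary vertex of a combinatorial ball automatically has a ball link, but an NH-ball may possess boundary vertices whose links are NH-spheres (as shown by simple examples such as a triangulated disk with a dangling edge glued at a single vertex). Establishing the existence of a vertex $v \in \tilde{\partial}B$ therefore requires a separate argument, most naturally an analysis of the pseudoboundary that exploits the collapsibility of $B$ together with the structural results on NH-manifolds established earlier in the paper (in particular, the existence of a spine from Corollary \ref{coro existence of spines}).
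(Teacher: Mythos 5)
Your strategy is a genuinely different one from the paper's: you transplant the dimension induction used in Theorem \ref{teo dong} (the paper's alternative proof of the Dong and Santos--Sturmfels theorems) directly to $NH$-balls, whereas the paper proves Theorem \ref{teo main for NH-balls} by an entirely global argument --- $B^*$ is acyclic by Alexander duality, so it suffices to show it is simply connected, and this is obtained either from the few-vertices collapsibility result of Bagchi--Datta (when $|V_B|\leq 7$) or by expanding $B^*$ to $K^*$ for a subcomplex $K\subset B$ with $B\searrow K$, $V_K=V_B$ and $|V_K|-\dim(K)=5$, so that $K^*$ contains the full $2$-skeleton of $\Delta(V_{K^*})$; the required collapses are exactly what Proposition \ref{propoballd+3tod-2} and Corollaries \ref{coro existence of spines} and \ref{corocollapsod-3} provide. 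Your reduction to $\tau=\emptyset$, your treatment of the case $|V_B|=d+2$ via Lemma \ref{lema d+2 is the link} (including ruling out the $NH$-sphere link by contractibility of $B$), and the Mayer--Vietoris finish via Lemma \ref{lema dos contractiles y S1}\eqref{item K=A+B contractible} are all sound.

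The problem is the step you flag at the end, and it is a genuine gap rather than a deferred routine verification: in the case $|V_B|\geq d+3$ your induction needs a vertex $v$ with $lk(v,B)$ an $NH$-ball, and nothing in the paper supplies one. For classical balls one takes any $v\in\partial B$; for $NH$-balls one knows $\tilde{\partial}B\neq\emptyset$ (a boundaryless $NH$-manifold is a closed combinatorial manifold, hence not collapsible), but $\tilde{\partial}B$ is not a subcomplex in the non-homogeneous case, so one cannot pass from a pseudoboundary simplex to a pseudoboundary vertex. Nor can you fall back on a vertex with $NH$-sphere link: that would require knowing $lk(v,B)^*$ is homotopy equivalent to a sphere, i.e.\ Theorem \ref{teorema esferas completo}, whose proof in the paper uses Theorem \ref{teo main for NH-balls}, so you would be running in a circle. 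The statement you need --- every $NH$-ball of positive dimension has a vertex whose link is an $NH$-ball --- is plausibly true (in dimension $2$ one can verify it by an Euler characteristic count of the surface part of $B$ together with the attached principal arcs), but it is a nontrivial structural assertion about $NH$-manifolds that must be proved, and Corollary \ref{coro existence of spines} by itself does not yield it: a spine gives free \emph{ridges}, not free vertices. Until this lemma is established, the inductive step, and hence the proof, is incomplete.
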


\begin{proof}

 By Lemma \ref{lema formula central} \eqref{item K tau es suspension de K} we only need to prove the case $\tau=\emptyset$.  It suffices to prove that $B^*$ is simply connected. Let $d=\dim(B)$ and $n=|V_B|$. We can assume that $d\geq 2$ since in lower dimensions all $NH$-balls are combinatorial. We can also assume that 
  $2\leq n-d\leq 4$, since if $n-d\geq 5$, a simple argument of Dong \cite{Dong} shows that $B^*$ is simply connected (it contains the complete $2$-skeleton of $\Delta(V_{B^*})$). 
  
 If  $n\leq 7$, $B^*$ is collapsible since it is acyclic and it has few vertices (\cite[Theorem 1]{BD2}). For $n\geq 8$, by Proposition \ref{propoballd+3tod-2} and Corollaries \ref{coro existence of spines} and \ref{corocollapsod-3} there exists a subcomplex $K\subset B$ such that $B\searrow K$ with $V_K=V_B$ and $|V_K|-\dim(K)=5$. Therefore $B^*\nearrow K^*$, and since $|V_K|-\dim(K)=5$, $K^*$ is simply connected.\end{proof}

Our next goal is to prove the second of our main results.

\begin{teo}\label{teorema esferas completo} Let $S$ be an $NH$-sphere and let $\tau$ be a simplex (possibly empty). Then, $S^{\tau}$ is homotopy equivalent to a sphere.\end{teo}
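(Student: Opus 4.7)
The plan is to reduce, via Lemma \ref{lema formula central}\eqref{item K tau es suspension de K}, to proving that $S^*$ is homotopy equivalent to a sphere (since $S^\tau \simeq \Sigma^t S^*$ for some $t \geq 0$). I will then induct on $d = \dim(S)$, following the same structural outline as the sphere portion of the proof of Theorem \ref{teo dong}, but with Theorem \ref{teo main for NH-balls} replacing the Santos-Sturmfels ingredient. The homogeneous case $\dim_h(S) = d$ reduces immediately to Theorem \ref{teo dong} via Proposition \ref{prop d=k implies homogeneous}, so the substantive situation is $\dim_h(S) < d$.

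For the inductive step, I will locate a vertex $v \in V_S$ such that $lk(v, S)$ is an NH-sphere of dimension strictly less than $d$ and $S - v$ is an NH-ball. Given such a $v$, Theorem \ref{teo main for NH-balls} makes $(S-v)^*$ contractible, and Lemma \ref{lema link y deletion dual}\eqref{item la que vale siempre} identifies this with $lk(v, S^*)$. Writing $S^* = (S^* - v) + st(v, S^*)$ with contractible intersection $lk(v, S^*)$ and contractible star $st(v, S^*)$, Lemma \ref{lema dos contractiles y S1}\eqref{item AcapB y B contractible K=A} yields $S^* \simeq S^* - v$, and then Lemma \ref{lema link y deletion dual}\eqref{item B*-v suspension of link*} gives $S^* - v \simeq \Sigma^t lk(v, S)^*$. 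The inductive hypothesis applied to the smaller-dimensional NH-sphere $lk(v, S)$ then finishes the step.

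To produce $v$ I will exploit a decomposition $S = B + L$. Any vertex $v \in V_L$ has $lk(v, S) = lk(v, B) + lk(v, L)$ an NH-sphere by \cite[Lemma 4.8]{CM}; the deletion $S - v$ is read off from the decomposition and is shown to admit an NH-ball structure (the clean special case $L = \Delta^k$ reduces $S - v$ to an NH-ball obtained from $B$ by deleting $v$, and the general case is handled by a further decomposition-based reduction via Theorem \ref{Alexander for NH-manifolds}). The extremal low-vertex situation $|V_S| = d + 2$ is handled separately via Lemma \ref{lema d+2 is the link}, which writes $S^* = (lk(u, S))^\tau$ for $u \in V_S - V_{S^*}$; here $lk(u, S)$ is forced to be an NH-sphere, because if it were an NH-ball then Theorem \ref{teo main for NH-balls} together with Lemma \ref{lema formula central}\eqref{item K tau es suspension de K} would make $S^*$ contractible, contradicting the non-vanishing of $\tilde H_{n-k-3}(S^*)$ required by Alexander duality.

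The main obstacle will be the NH-analog of Newman's theorem: the assertion that $S - v$ is an NH-ball for the chosen interior vertex $v$. The NH-manifold property requires, for every vertex $w$ adjacent to $v$, that $lk(w, S - v) = lk(w, S) - v$ remain an NH-ball or NH-sphere, a recursive claim controlled by \cite[Lemma 4.8]{CM} together with an inductive invocation of the Newman-type deletion statement in smaller dimension. Collapsibility of $S - v$ then follows from its acyclicity (a Mayer-Vietoris computation in $S = (S - v) + st(v, S)$ using $lk(v, S) \simeq S^j$ for the appropriate $j$) combined with Corollary \ref{coro existence of spines}, which provides a spine for any NH-manifold with nonempty boundary.
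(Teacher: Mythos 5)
Your reduction steps are sound and match the architecture of the paper's proof: passing to $\tau=\emptyset$, disposing of the homogeneous case via Proposition \ref{prop d=k implies homogeneous} and Theorem \ref{teo dong}, the mechanism ``$(S-v)^*$ contractible $\Rightarrow S^*\simeq S^*-v\simeq\Sigma^t\,lk(v,S)^*$'', and the $|V_S|=d+2$ case via Lemma \ref{lema d+2 is the link}. The gap is at the crux: your claim that for $v\in V_L$ the deletion $S-v$ is an NH-ball (an ``NH-Newman theorem''). This is not available, and your sketch of it does not close. First, to show $S-v$ is an NH-manifold you must show that $lk(w,S)-v$ is an NH-ball or NH-sphere for every $w$ adjacent to $v$; for $w\in B$ outside $L$ the link $lk(w,S)=lk(w,B)$ can be an NH-\emph{ball}, so the recursion forces a deletion statement for NH-balls as well, which is a different and in general false assertion (a $2$-ball link losing an interior vertex becomes an annulus; this configuration arises already for a chord edge of a $3$-ball joining two boundary vertices through the interior). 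Second, even granting that $S-v$ is an NH-manifold, your derivation of collapsibility is invalid: acyclicity together with Corollary \ref{coro existence of spines} yields only a collapse to a subcomplex of \emph{smaller dimension}, not to a point, and that spine need not be an NH-manifold, so the corollary cannot be iterated; ``acyclic and admitting a spine'' does not imply collapsible, and NH-ball is defined via collapsibility.

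The paper avoids exactly this. It proves a deletion statement only in a sharply restricted setting (Lemma \ref{lema vertice esfera}: $|V_S|=d+3$, $\dim_h S=d-1$, and $lk(v,S)$ a combinatorial $(d-2)$-sphere), where the few-vertices hypothesis controls all the links and where collapsibility of $S-v$ comes for free from Proposition \ref{propo d+2 nh manifolds}. Everywhere else it works differently: for $|V_S|-\dim(S)\geq5$ the dual contains the full $2$-skeleton of $\Delta(V_{S^*})$, so simple connectivity plus Alexander duality finishes (a reduction your proposal omits); the remaining cases are split by the pair $(|V_S|,\dim_h S)$ and handled either by collapsing $S$ to a complex $K$ with $|V_K|-\dim(K)=5$ (case \eqref{(D)}, using Proposition \ref{propoballd+3tod-2}, Corollary \ref{corocollapsod-3} and the few-vertices collapsibility results of \cite{BD2} and \cite{KK}), or by finding some $z\in V_L$ with $(S-z)^*$ contractible via a direct non-edge argument on the dual side (Lemma \ref{prop v en L nonedge}) rather than by identifying the simplicial structure of $S-z$. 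To repair your argument you would need either to prove the NH-Newman theorem outright --- a substantial new result --- or to restrict, as the paper does, to the vertex counts where the needed deletion can be verified by hand and dispatch the rest by the simple-connectivity reduction.
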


Like in the proof for $NH$-balls, we only need to prove the case $\tau=\emptyset$ and $2\leq |V_S|-\dim(S)\leq 4$ since, as before, if $|V_S|-\dim(S)\geq 5$, then $S^*$ is simply connected, and a simply connected space with the homology of a sphere is homotopy equivalent to one. We can suppose also that $\dim_h(S)<\dim(S)$ by Proposition \ref{prop d=k implies homogeneous} and Theorem \ref{teo dong}. The $1$-dimensional case is easy to verify.

The proof of Theorem \ref{teorema esferas completo} will be divided in the following four cases. Let $d=\dim(S)\geq 2$, $n=|V_S|$ and  $k=\dim_h(S)$. We handle each case separately.

\begin{enumerate}[(A)]
\item\label{(A)} $n=d+2$ and $k=d-1$.
\item\label{(B)} $n=d+2$ and $k=d-2$.
\item\label{(C)} $n=d+3$ and $k=d-1$.
\item\label{(D)} Remaining cases.
\end{enumerate}

\begin{proof}[Proof of Case \eqref{(D)}]\label{coro main for nh-spheres 1} We will show that $S\searrow K$ with $|V_K|-\dim(K)=5$. The result will follow immediately from Lemma \ref{remark MiRo} and the fact that $K^*$ is simply connected. The case $n=d+4$ follows directly from Corollary \ref{coro existence of spines} by collapsing (only) the $d$-simplices of $S$.

Suppose now that $n=d+2$ or $d+3$ and let $S=B+L$ be a decomposition. We first analyze the case $n=5$. In this situation, $L=\ast$. If $d=2$ then $B$ is acyclic with four vertices and if $d=3$ then $B=\Delta^3$. Similarly as in the $1$-dimensional case, $S\searrow S^0$ and the result follows from Lemma \ref{remark MiRo}.

Suppose $n=d+3$ with $n\geq 6$. The complex $B$ in the decomposition of $S$ is an $NH$-ball of dimension $d$ and $|V_B|\in\{d+1,d+2,d+3\}$. In any case, $B$ collapses to a ($d-2$)-dimensional complex $T$ whether because $B=\Delta^d$ or by  Corollary \ref{corocollapsod-3} or Proposition \ref{propoballd+3tod-2}. Moreover, since $d\geq 3$, we can arrange the collapses in order of decreasing dimension to get $V_T=V_B$ by collapsing only the $d$ and ($d-1$)-dimensional simplices. Since $\dim(L)\leq d-2$ and it is top generated, the collapses in $B\searrow T$ can be carried out in $S$ and therefore $S\searrow K=T+L$, which is a complex with the desired properties.

The case $n=d+2$ with $n\geq 6$ follows similarly as the previous case by showing that $S$ collapses to a ($d-3$)-dimensional complex with the same vertices.\end{proof}

\begin{proof}[Proof of Case \eqref{(A)}]\label{coro d d+2 con d-1} We proceed by induction. Write $S=\Delta^d+u\ast lk(u,S)$ with $u\notin\Delta^d$. Note that $lk(u,S)$ is an $NH$-sphere of homotopy dimension $d-2$ and dimension $d-2$ or $d-1$. By Lemmas \ref{lema d+2 is the link} and \ref{lema formula central} \eqref{item K tau es suspension de K} it suffices to show that $lk(u,S)^*$ is homotopy equivalent to a sphere. If $\dim(lk(u,S))=d-2$ then $lk(u,S)$ is homogeneous by Proposition \ref{prop d=k implies homogeneous} and the result follows from Theorem \ref{teo dong}. If $\dim(lk(u,S))=d-1$ then $|V_{lk(u,S)}|=d+1$ and the result follows by the inductive hypothesis.\end{proof}

In order to prove the cases \eqref{(B)} and \eqref{(C)} we need some preliminary results.

\begin{lema}\label{lema S-v acyclic} Let $S=B+L$ be a decomposition of an $NH$-sphere. If $v\in L$ then $S-v$ is contractible. 
\end{lema}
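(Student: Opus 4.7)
The plan is to split into two cases according to whether $v$ lies in the interior of $L$ or on $\partial L$. In each case I will decompose $S-v$ as a union of two subcomplexes along a smaller intersection and apply the standard gluing lemma for cofibrations (if $A\hookrightarrow Y$ is a cofibration and a homotopy equivalence then $X\hookrightarrow X\cup_A Y$ is a homotopy equivalence) to reduce contractibility of $S-v$ to the contractibility of a smaller complex.

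For the first case, $v\in L\setminus\partial L$: since $B\cap L=\partial L$ the vertex $v$ does not lie in $B$, so $S-v=B\cup(L-v)$ with $B\cap(L-v)=\partial L$. Because $v$ is an interior vertex of the combinatorial $k$-ball $L$, the underlying space of $L-v$ is the complement in the PL $k$-ball $|L|$ of an open PL $k$-ball neighborhood of $v$, and a standard radial PL argument provides a deformation retraction of $L-v$ onto $\partial L$. Hence the inclusion $\partial L\hookrightarrow L-v$ is a homotopy equivalence, and the gluing lemma yields $S-v\simeq B$, which is contractible.

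For the second case, $v\in\partial L\subseteq B$: we have $S-v=(B-v)\cup(L-v)$ with $(B-v)\cap(L-v)=\partial L-v$, and I will show that each of $\partial L-v$, $L-v$ and $B-v$ is contractible, after which the gluing lemma applied to $\partial L-v\hookrightarrow L-v$ (a subcomplex inclusion between contractible CW complexes, hence a homotopy equivalence by Whitehead) will give $S-v\simeq B-v$. Newman's theorem applied to the combinatorial $(k-1)$-sphere $\partial L$ yields that $\partial L-v$ is a combinatorial $(k-1)$-ball. For $L-v$, I apply the gluing lemma to $L=(L-v)\cup st(v,L)$ with intersection $lk(v,L)$: since $v\in\partial L$, the link $lk(v,L)$ is a combinatorial $(k-1)$-ball and $st(v,L)=v\ast lk(v,L)$ is a combinatorial $k$-ball, both contractible, so $L\simeq L-v$. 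Finally, by \cite[Lemma 4.8]{CM} the link $lk(v,S)$ has decomposition $lk(v,B)+lk(v,L)$, so $lk(v,B)$ is an $NH$-ball and $st(v,B)=v\ast lk(v,B)$ is an $NH$-ball by Theorem \ref{teo recuento nh-variedades} \eqref{item join de balls and spheres}; both being contractible, the gluing lemma applied to $B=(B-v)\cup st(v,B)$ yields $B\simeq B-v$.

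The main obstacle I anticipate is the PL deformation-retraction used in the first case (that $L-v$ retracts onto $\partial L$ when $v$ is interior to the combinatorial ball $L$); this is a standard PL fact but must be invoked as a black box to keep the argument concise. Every other ingredient---Newman's theorem, the gluing lemma for cofibrations, the join formulas for stars, and \cite[Lemma 4.8]{CM}---has already been established.
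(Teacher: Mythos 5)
Your proof is correct and follows essentially the same route as the paper's: the same case split on whether $v$ lies in $L^{\circ}$ or in $\partial L$, the same decompositions $S-v=B+(L-v)$ and $S-v=(B-v)+(L-v)$, and the same ingredients (the PL retraction of $L-v$ onto $\partial L$, Newman's theorem, \cite[Lemma 4.8]{CM} to see $lk(v,B)$ is an $NH$-ball, and the gluing lemma). You merely spell out in more detail the contractibility of $B-v$ and $L-v$, which the paper asserts directly from $v\in\partial L\cap\tilde{\partial}B$.
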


\begin{proof}  If $v\in L^{\circ}$ then $L-v$ deformation retracts to $\partial L\subset B$ and, hence, $S-v\simeq B\simeq\ast$. Otherwise, $v\in\partial L\cap\tilde{\partial}B$ and $S-v=(B-v)+(L-v)$ with $(B-v)\cap(L-v)=\partial L-v$. Since $v\in\partial L\cap\tilde{\partial}B$, then $B-v$ and $L-v$ are contractible. On the other hand, $\partial L-v$ is contractible by Newman's theorem. Hence, $S-v$ is contractible.
\end{proof}

\begin{lema} \label{lema vertice esfera} Let $S=B+L$ be a decomposition of an $NH$-sphere of dimension $d\geq 1$ satisfying the hypotheses of case \eqref{(C)}. If $lk(v,S)$ is a combinatorial ($d-2$)-sphere then $S-v$ is an $NH$-ball.\end{lema}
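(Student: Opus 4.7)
The plan combines three ingredients: Lemma~\ref{lema S-v acyclic} (which gives contractibility of $S-v$), a vertex count, and Proposition~\ref{propo d+2 nh manifolds} (which identifies $NH$-manifolds of dimension $d$ with $d+2$ vertices as $NH$-balls or $NH$-spheres). First I would use the hypothesis that $lk(v,S)$ is a combinatorial $(d-2)$-sphere---in particular homogeneous of dimension $d-2$---together with the link decomposition $lk(v,S)=lk(v,B)+lk(v,L)$ from \cite[Lemma 4.8]{CM} (valid whenever $v\in L$) to locate $v$. Dimensional considerations exclude $v$ from any $d$-simplex of $B$, and in the main case one concludes that $v\in L^{\circ}$, so that $v\notin B$ and $S-v=B+(L-v)$ with $B\cap(L-v)=\partial L$.

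Next I would verify the vertex count. Every neighbor of $v$ lies in the combinatorial $(d-2)$-sphere $lk(v,S)$, which already contains simplices involving that neighbor and not $v$; hence deletion of $v$ removes no other vertex and $|V_{S-v}|=(d+3)-1=d+2$. The dimension of $S-v$ is still $d$, since the $d$-simplices of $B$ persist, and contractibility of $S-v$ follows at once from Lemma~\ref{lema S-v acyclic}.

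The main technical step is to show that $S-v$ is an $NH$-manifold. For each $w\in V_{S-v}$, $lk(w,S-v)=lk(w,S)-v$. When $w$ is not adjacent to $v$ this equals $lk(w,S)$, which is an $NH$-ball or $NH$-sphere by hypothesis on $S$. When $w\in lk(v,S)$, I would exploit the identity $lk(v,lk(w,S))=lk(w,lk(v,S))$, a combinatorial $(d-3)$-sphere because $lk(v,S)$ is a boundaryless combinatorial sphere. Newman's theorem then handles the sub-case where $lk(w,S)$ is a classical $(d-1)$-sphere (giving a combinatorial $(d-1)$-ball), and an induction on $d$ handles the sub-case where $lk(w,S)$ is a genuine $NH$-sphere, to which the same statement applies at lower dimension. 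Once the $NH$-manifold property of $S-v$ is established, Proposition~\ref{propo d+2 nh manifolds} forces $S-v$ to be an $NH$-ball or $NH$-sphere, and contractibility rules out the $NH$-sphere possibility. I expect the main obstacle to be precisely this link verification for vertices $w\in\partial L$, where $lk(w,S)$ admits a non-trivial $NH$-sphere decomposition $lk(w,B)+lk(w,L)$ and $v$ sits as an interior vertex of the combinatorial $(d-2)$-ball $lk(w,L)$; this is the sub-case where the inductive structure of the argument is indispensable, and where one must also check that the ``case~(C)'' hypotheses are inherited by $lk(w,S)$.
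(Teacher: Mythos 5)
Your outline has two genuine gaps, both at places the paper's proof spends most of its effort. First, the localization of $v$: from the hypothesis that $lk(v,S)$ is a combinatorial $(d-2)$-sphere you cannot conclude that $v\in L^{\circ}$, nor even that $v\in L$. The hypothesis only forces $v$ to lie outside every $d$-simplex; $v$ may perfectly well lie in $\partial L=B\cap L$, or entirely in $B-L$ with $lk(v,B)=lk(v,S)$ a $(d-2)$-sphere inside the non-homogeneous part of the $NH$-ball $B$. The paper's proof explicitly treats the case $v\notin L$ (its Step~1 constructs, for such $v$, a $d$-simplex of $st(w,S)$ adjacent to a $(d-1)$-simplex of $L$). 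This also invalidates your appeal to Lemma~\ref{lema S-v acyclic}, whose hypothesis is $v\in L$; the paper instead gets acyclicity of $S-v$ directly from Mayer--Vietoris applied to $S=(S-v)+st(v,S)$, which works for any $v$ with spherical link.

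Second, and more seriously, your case analysis for a neighbour $w$ of $v$ only contemplates $lk(w,S)$ being a combinatorial sphere or an $NH$-sphere, but in a non-homogeneous $NH$-sphere many vertices have links that are $NH$-\emph{balls}, and nothing you have said excludes $w\in st(v,S)$ from being such a vertex. The crux of the paper's argument (its Steps~1--3) is a homological proof that this cannot happen: one shows $H_{d-1}(S-w)\simeq H_{d-1}(B-w)=0$ and then reads off from Mayer--Vietoris that $H_{d-2}(lk(w,S))\neq 0$, whence $lk(w,S)$ is an $NH$-sphere of homotopy dimension exactly $d-2$. That computation is precisely what makes the rest go through: it is what lets you invoke Proposition~\ref{prop d=k implies homogeneous} plus Newman in the homogeneous sub-case, and what verifies that $lk(w,S)$ inherits the case~(C) hypotheses (dimension $d-1$, $d+2$ vertices, homotopy dimension $d-2$) so the inductive hypothesis applies in the sub-case $|V_{lk(w,S)}|=d+2$. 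You flag this inheritance as something ``to check,'' but it is not a routine verification; without the homological step the induction does not close. (There is also a small sub-case you omit, $\dim(lk(w,S))=d-1$ with only $d+1$ vertices, where one writes $lk(w,S)=\Delta^{d-1}+st(v,lk(w,S))$ and observes $lk(w,S)-v=\Delta^{d-1}$.)
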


\begin{proof} We proceed by induction in $d$. The case $d=1$ is straightforward. Let $d\geq 2$. We prove first that $S-v$ is an $NH$-manifold.

Let $w\in S-v$. We have to show that its link is an $NH$-sphere or an $NH$-ball.  If $w\notin st(v,S)$ then $lk(w,S-v)=lk(w,S)$ which is an $NH$-ball or $NH$-sphere. Suppose $w\in st(v,S)$. We will show first that $lk(w,S)$ is an $NH$-sphere of homotopy dimension $d-2$. We prove this in various steps. Note that this is clear if $w\in L$, so we may suppose $w\notin L$. 

{\it Step 1.} We first prove that if $v\notin L$ then there is a $d$-simplex in $st(w,S)$ which is adjacent to a $(d-1)$-simplex of $L$. Write $\Delta^d=\{v,w\}^c$. Since $v,w\notin L$ then $L\subset\Delta^d$ and therefore $\Delta^d\notin S$ because $L$ is top generated in $S$. Since $\dim(S)=d$ and $st(v,S)$ is ($d-1$)-homogeneous then $w$ is a face of $d$-simplex $\rho$ not containing $v$. Since any two ($d-1$)-faces of $\Delta^d$ are adjacent then $\rho$ is adjacent to some ($d-1$)-simplex of $L$.

{\it Step 2.} We now prove that the inclusion induces an isomorphism $H_{d-1}(S-w)\simeq H_{d-1}(B-w)$. On one hand, the induced homomorphism $H_{d-1}(B-w)\rightarrow H_{d-1}(S-w)$ is injective since $(S-w)-(B-w)=L-w$ is ($d-1$)-dimensional. To prove that it is also surjective we show that any ($d-1$)-cycle in $S-w$ cannot contain a ($d-1$)-simplex of $L$. Suppose $\sigma\in L$ is a non-trivial factor in a ($d-1$)-cycle $c^{d-1}$ of $S-w$. Then every ($d-1$)-simplex in $L$ appears in $c^{d-1}$ since $c^{d-1}$ is a cycle and $L$ is a top generated combinatorial ($d-1$)-ball. If $v\in L$ then every ($d-1$)-simplex of $st(v,S)$ appears in $c^{d-1}$ since $st(v,S)$ is also a top generated ($d-1$)-ball. In this case, at least one ($d-1$)-simplex of $st(v,S)$ belongs to $st(w,S)$, contradicting the fact that $c^{d-1}$ is a cycle in $S-w$. On the other hand, if $v\notin L$ then there exists by step 1 a principal ($d-1$)-simplex $\tau\in L$ with a boundary ($d-2$)-face $\eta<\rho\in st(w,S)$ with $\dim(\rho)=d$. Let $z=lk(\eta,\tau)$. Note that there are no $d$-simplices outside $st(w,S)$ containing $\eta$ since neither $v$, $w$ nor $z$ may belong to such $d$-simplex and $|V_S|=d+3$. Since $S$ is an $NH$-manifold, $\tau$ is the only principal ($d-1$)-simplex containing $\eta$, and then $\partial c^{d-1}\neq 0$ in $S-w$, which is a contradiction.

{\it Step 3.} We prove that $lk(w,S)$ is an $NH$-sphere of homotopy dimension $d-2$. We claim first that $H_{d-1}(S-w)=0$. By step 2 it suffices to show that $H_{d-1}(B-w)=0$. From the Mayer-Vietoris sequence applied to $B=B-w+st(w,S)$ and the fact that $lk(w,B)=lk(w,S)$ (here we use that $w\notin L$), it follows that $H_{d-1}(B-w)\simeq H_{d-1}(lk(w,S))$. If $H_{d-1}(lk(w,S))\neq 0$ then $lk(w,S)$ is ($d-1$)-homogeneous by Proposition \ref{prop d=k implies homogeneous}, which is a contradiction since $st(w,S)$ contains at least a ($d-1$)-simplex. Thus, the claim is proved.

If we now consider the Mayer-Vietoris sequence for $S=S-w+st(w,S)$ in degree $d-1$ one has that $\mathbb{Z}\simeq H_{d-1}(S)\rightarrow H_{d-2}(lk(w,S))$ is injective, so $H_{d-2}(lk(w,S))\neq 0$ and therefore $lk(w,S)$ is an $NH$-sphere of homotopy dimension $d-2$.

Finally if $\dim(lk(w,S))=d-2$ then $lk(w,S)$ is a combinatorial ($d-2$)-sphere by Proposition \ref{prop d=k implies homogeneous} and therefore, $lk(w,S-v)=lk(w,S)-v$ is a combinatorial ($d-2$)-ball by Newman's theorem. Suppose that $\dim(lk(w,S))=d-1$. If $|V_{lk(w,S)}|=d+1$ then we may write $lk(w,S)=\Delta^{d-1}+st(v,lk(w,S))$ since $v$ is not a vertex of a $d$-simplex in $S$. In this case $lk(w,S)-v=\Delta^{d-1}$. If $|V_{lk(w,S)}|=d+2$ we may apply the inductive hypothesis since $lk(v,lk(w,S))=lk(w,lk(v,S))$ is a combinatorial ($d-3$)-sphere, and conclude that $lk(w,S)-v$ is an $NH$-ball. This proves that $S-v$ is an $NH$-manifold. 

We prove now that $S-v$ is an $NH$-ball. Note that $\dim(S-v)=d$ and $|V_{S-v}|=d+2$, so  by Proposition \ref{propo d+2 nh manifolds} we only need to prove that it is acyclic, and this follows immediately from the Mayer-Vietoris sequence applied to $S=S-v+st(v,S)$.\end{proof}

\begin{lema}\label{prop v en L nonedge} Let $S=B+L$ be a decomposition of an $NH$-sphere satisfying the hypotheses of case \eqref{(C)}. If there is a vertex $v$ in $L$ such that $\dim(S-v)=d$ and there is a non-edge $\{u,w\}$ of $S$ with $u,w\neq v$ then $(S-v)^*$ is contractible.

\begin{proof} By Lemma \ref{lema de cantidad de vertices}, $|V_{S^*}|=d+3$. By hypothesis $\{u,w\}^c\in S^*$ is a $d$-simplex and since $v\neq u,w$ then $v\in\{u,w\}^c$. Therefore, $\dim(lk(v,S^*))=d-1$. On the other hand, there exists a $d$-simplex $\eta\in S$ with $v\notin\eta$; hence $\{v,a\}:=\eta^c\notin S^*$. Therefore, $|V_{lk(v,S^*)}|\leq d+1$. If $|V_{lk(v,S^*)}|=d$ then $(S-v)^*=lk(v,S^*)$ is a ($d-1$)-simplex. If $|V_{lk(v,S^*)}|=d+1$ then $lk(v,S^*)=(S-v)^*$ is acyclic by Lemma \ref{lema S-v acyclic} and Alexander duality, and therefore contractible by Lemma \ref{lema dos contractiles y S1} \eqref{item K=A+B contractible}.\end{proof}

\end{lema}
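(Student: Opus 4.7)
The approach combines Alexander duality with Lemma \ref{lema dos contractiles y S1}\eqref{item K=A+B contractible}, which turns acyclicity into contractibility whenever the complex has dimension $d'$ and exactly $d'+2$ vertices. I would work directly with $(S-v)^*$ viewed inside $\Delta_{S-v}$, exploiting that case \eqref{(C)} forces $|V_{S-v}|=d+2$, which is precisely the borderline situation where that criterion is sharpest.

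First I would use that $v\in L$ together with Lemma \ref{lema S-v acyclic} to conclude $S-v$ is contractible; combinatorial Alexander duality (with ground set $V_{S-v}$) then gives that $(S-v)^*$ is acyclic. Next I would pin down the dimension of $(S-v)^*$. The non-edge $\{u,w\}$ of $S$ is also a non-edge of $S-v$ since $u,w\neq v$, so $\Delta(V_{S-v}-\{u,w\})\in (S-v)^*$ has dimension $d-1$, providing the lower bound. For the upper bound, any $d$-simplex in $(S-v)^*$ would be of the form $\{x\}^c$ for some $x\in V_{S-v}$, which would require $\{x\}\notin S-v$; this is impossible because $x$ is by definition a vertex of $S-v$. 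Hence $\dim((S-v)^*)=d-1$.

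Then I would bound the number of vertices. The hypothesis $\dim(S-v)=d$ supplies a $d$-simplex $\eta\in S-v$; its complement in the $(d+2)$-element set $V_{S-v}$ is a single vertex $a$, and since $\{a\}^c=\eta\in S-v$ we get $a\notin V_{(S-v)^*}$, so $|V_{(S-v)^*}|\leq d+1$. Combining the three pieces, $(S-v)^*$ is acyclic, has dimension $d-1$, and has at most $(d-1)+2$ vertices. If the vertex count is exactly $d+1$, Lemma \ref{lema dos contractiles y S1}\eqref{item K=A+B contractible} yields contractibility directly; if it is smaller, the dimension forces $(S-v)^*$ to be a full $(d-1)$-simplex, which is automatically contractible.

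The main subtlety I expect lies in establishing the exact dimension $d-1$ of $(S-v)^*$: both bounds require careful bookkeeping of which vertices belong to which ambient set. The upper bound quietly uses that every element of $V_{S-v}$ is a genuine vertex of $S-v$, while the lower bound uses the non-edge hypothesis essentially — without a non-edge disjoint from $v$, no $(d-1)$-simplex need appear in the dual and the vertex-count argument would not engage Lemma \ref{lema dos contractiles y S1}\eqref{item K=A+B contractible} at all.
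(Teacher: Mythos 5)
Your argument is correct and is essentially the paper's own proof, merely phrased by dualizing $S-v$ directly inside $\Delta(V_S\setminus\{v\})$ instead of computing $lk(v,S^*)$ (which Lemma \ref{lema link y deletion dual}\eqref{item la que vale siempre} identifies with $(S-v)^*$): the non-edge produces the $(d-1)$-simplex, the $d$-simplex $\eta$ avoiding $v$ excludes the vertex $a$ complementary to $\eta$, and acyclicity (Lemma \ref{lema S-v acyclic} plus Alexander duality) together with the dimension/vertex count invokes Lemma \ref{lema dos contractiles y S1}\eqref{item K=A+B contractible} exactly as in the paper, with the same degenerate case of a full $(d-1)$-simplex. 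The only cosmetic difference is that you make explicit the (easy) upper bound $\dim((S-v)^*)\leq d-1$ and the identification $V_{S-v}=V_S\setminus\{v\}$, which the paper leaves implicit.
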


\begin{lema}		\label{prop (S-v)* contractil}

	Let $S$ be an $NH$-sphere satisfying the hypotheses of case \eqref{(C)}. Then, for any decomposition $S=B+L$ there exists $z\in V_L$ such that $(S-z)^*$ is contractible.

	\begin{proof} We proceed by induction in $d$. The $1$-dimensional case is straightforward. Let $d\geq 2$ and let $u\in V_L$. If $\dim(lk(u,S))=d-2$ then $lk(u,S)$ is a combinatorial $(d-2)$-sphere and the result follows from Lemma \ref{lema vertice esfera} and Theorem \ref{teo main for NH-balls}. Suppose $\dim(lk(u,S))=d-1$. We analyze the two possible cases $|V_{lk(u,S)}|=d+1$ or $|V_{lk(u,S)}|=d+2$.
	
If $|V_{lk(u,S)}|=d+1$, let $w\in S$ such that $\{u,w\}\notin S$. Let $\Delta^d$ be a $d$-simplex containing $u$ and let $v=V_S-V_{\Delta^d}-\{w\}$. Since $L$ is top generated then either $v\in L$ or $w\in L$. If $v\in L$ then Lemma \ref{prop v en L nonedge} implies that $(S-v)^*\simeq\ast$. Assume then that $v\notin L$ (and hence $w\in L$). We may assume $\dim(lk(w,S))=d-1$ since otherwise $w$ is the desired vertex by Lemma \ref{lema vertice esfera} and Theorem \ref{teo main for NH-balls} again. Let $\tilde{\Delta}^d$ be a $d$-simplex containing $w$. Since $L$ is top generated, $w\in L$ and $v\notin L$ then $\tilde{\Delta}^d=w\ast v\ast\Delta^{d-2}$ with $\Delta^{d-2}\prec\Delta^d-u$. Let $x\neq u$ be the only vertex in $\Delta^{d-2}$ not in $\tilde{\Delta}^d$. Then, $x\in L$ and it fulfils the hypotheses of Lemma \ref{prop v en L nonedge}. Therefore $(S-x)^*$ is contractible. 

Suppose finally that $|V_{lk(u,S)}|=d+2$. From the decomposition $lk(u,S)=lk(u,B)+lk(u,L)$ there exists $y\in lk(u,L)$ such that $(lk(u,S)-y)^*\simeq\ast$ by the inductive hypothesis. If $u\notin (S-y)^*$ then $u^c\in S-y$; i.e. $S-y-u=\Delta^d$. In this case, we can write $S-y=\Delta^d+u\ast lk(u,S-y)$ and we have $(S-y)^*=(lk(u,S)-y)^{\tau}$ by Lemma \ref{lema d+2 is the link}. If $lk(u,S)-y$ is not a simplex then $(lk(u,S)-y)^{\tau}\simeq \Sigma^t (lk(u,S)-y)^*\simeq\ast$ by Lemma \ref{lema formula central} \eqref{item K tau es suspension de K} and if $lk(u,S)-y=\Delta^r$ then $\tau\neq\emptyset$ and $(lk(u,S)-y)^{\tau}=\partial\tau\ast\Delta^r\simeq\ast$. In either case, $y$ is the desired vertex. Assume $u\in(S-y)^*$. Then we have a non-trivial decomposition $$(S-y)^*=(S-y)^*-u\underset{lk(u,(S-y)^*)}{+}st(u,(S-y)^*).$$ Since neither $S-y$ nor $lk(u,S)-y$ are simplices and $u\in S-y$ is not isolated then $(S-y)^*-u\simeq\Sigma^t lk(u,S-y)^*\simeq\ast$ by Lemma \ref{lema link y deletion dual} \eqref{item B*-v suspension of link*}. The result then follows by Lemmas \ref{lema dos contractiles y S1} \eqref{item K=A+B contractible} and \ref{lema S-v acyclic}.\end{proof}

\end{lema}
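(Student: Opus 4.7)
The plan is to induct on $d=\dim(S)$, with the $1$-dimensional case checked directly. For $d\geq 2$, I pick any $u\in V_L$ and split on $\dim(lk(u,S))$, which must be either $d-2$ or $d-1$ since $u\in L$ contributes a $(d-2)$-ball to any link decomposition. If $\dim(lk(u,S))=d-2$, then $lk(u,S)$ is an $NH$-sphere whose homotopy dimension equals its dimension; Proposition \ref{prop d=k implies homogeneous} makes it a combinatorial $(d-2)$-sphere, Lemma \ref{lema vertice esfera} yields that $S-u$ is an $NH$-ball, and Theorem \ref{teo main for NH-balls} gives $(S-u)^*\simeq\ast$. So $z=u$ works.

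If instead $\dim(lk(u,S))=d-1$, I would split on $|V_{lk(u,S)}|\in\{d+1,d+2\}$. When $|V_{lk(u,S)}|=d+1$, there is a non-edge $\{u,w\}$ of $S$; choosing a $d$-simplex $\Delta^d$ through $u$ and letting $v$ be the unique remaining vertex of $V_S\setminus(V_{\Delta^d}\cup\{w\})$, top generation of $L$ forces at least one of $v,w$ into $L$, because otherwise $V_L\subseteq V_{\Delta^d}$ and no principal $(d-1)$-simplex of $L$ could remain principal in $S$. If $v\in L$, then Lemma \ref{prop v en L nonedge} applied to the non-edge $\{u,w\}$ (with $u,w\neq v$) gives $(S-v)^*\simeq\ast$. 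If instead $v\notin L$ and $w\in L$, one may assume $\dim(lk(w,S))=d-1$ (else the previous case applies with $w$ in place of $u$), and the rigidity imposed by top generation together with $w\in L$ and $v\notin L$ pins down any $d$-simplex through $w$ to the form $w\ast v\ast\Delta^{d-2}$, producing a third vertex $x\in V_L$ satisfying the hypotheses of Lemma \ref{prop v en L nonedge}.

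The heart of the argument is the subcase $|V_{lk(u,S)}|=d+2$. Here the link decomposes as $lk(u,S)=lk(u,B)+lk(u,L)$ and again falls under case \eqref{(C)} in dimension $d-1$, so the inductive hypothesis provides $y\in V_{lk(u,L)}$ with $(lk(u,S)-y)^*\simeq\ast$. To obtain $(S-y)^*\simeq\ast$ I would check whether $u$ remains a vertex of the dual. If $u\notin V_{(S-y)^*}$, then $S-y$ takes the form $\Delta^d+u\ast lk(u,S-y)$ and Lemma \ref{lema d+2 is the link} identifies $(S-y)^*$ with $(lk(u,S)-y)^{\tau}$, which is contractible either as a suspension of $(lk(u,S)-y)^*\simeq\ast$ by Lemma \ref{lema formula central}(\ref{item K tau es suspension de K}), or explicitly as $\partial\tau\ast\Delta^r$ in the degenerate case. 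If $u\in V_{(S-y)^*}$, split $(S-y)^*=((S-y)^*-u)\cup st(u,(S-y)^*)$ along $lk(u,(S-y)^*)$: the star is a cone, the deletion $(S-y)^*-u$ is a suspension of $lk(u,S-y)^*=(lk(u,S)-y)^*\simeq\ast$ by Lemma \ref{lema link y deletion dual}(\ref{item B*-v suspension of link*}), and $(S-y)^*$ is itself acyclic since $S-y$ is contractible by Lemma \ref{lema S-v acyclic} combined with Alexander duality; Lemma \ref{lema dos contractiles y S1}(\ref{item K=A+B contractible}) then closes the argument.

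The main obstacle will be this last subcase: the inductive hypothesis controls only the dual of the link minus $y$, but what we actually need is the dual of $S-y$ itself, and reconciling these requires a careful split on whether $u$ survives in the ambient dual and, within the surviving branch, on whether $lk(u,S)-y$ degenerates to a simplex (forcing the suspension formula to be replaced by the explicit join $\partial\tau\ast\Delta^r$). The combinatorial subcase at $|V_{lk(u,S)}|=d+1$ is also delicate, since extracting the desired vertex of $V_L$ requires leveraging top generation against a prescribed non-edge to rigidify the local simplicial structure.
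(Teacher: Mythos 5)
Your proposal follows the paper's own proof essentially step for step: the same induction on $d$, the same case split on $\dim(lk(u,S))$ and then on $|V_{lk(u,S)}|$, and the same deployment of Lemmas \ref{lema vertice esfera}, \ref{prop v en L nonedge}, \ref{lema d+2 is the link}, \ref{lema link y deletion dual} and \ref{lema S-v acyclic}, including the final dichotomy on whether $u$ survives in $(S-y)^*$. The approach is correct and matches the paper's argument.
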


\begin{proof}[Proof of Cases \eqref{(B)} and \eqref{(C)}]\label{prop d-1,d+3 and d-2,d+2} We prove \eqref{(B)} and \eqref{(C)} together by induction in $d$. Let $S=B+L$ be a decomposition.

 If $d=2$, $B$ is collapsible since it is acyclic and has few vertices. Then $S\searrow S^0$ for \eqref{(B)} and $S\searrow S^1$ for \eqref{(C)}. The results then follow in both cases from Lemma \ref{remark MiRo}. 

Let $d\geq 3$. Suppose first that $S$ satisfies the hypotheses of \eqref{(B)}. Write $S=\Delta^d+v\ast lk(v,S)$. Then $S^*=lk(v,S)^{\tau}$ for $\tau=\Delta(V_S-V_{st(v,S)})$ by Lemma \ref{lema d+2 is the link}. Since $lk(v,S)$ is an $NH$-sphere of dimension $\leq d-1$ then the result follows from Theorem \ref{teo dong}, cases \eqref{(A)} and \eqref{(D)} or the inductive hypothesis on \eqref{(B)} and \eqref{(C)}.

Finally suppose $S$ satisfies the hypotheses of \eqref{(C)}. By Lemma \ref{prop (S-v)* contractil} there exists $v\in V_L$ such that $(S-v)^*\simeq\ast$. Write $S^*=S^*-v+st(v,S^*)$ where $(S^*-v)\cap st(v,S^*)=lk(v,S^*)=(S-v)^*\simeq\ast$. By Lemmas \ref{lema dos contractiles y S1} \eqref{item AcapB y B contractible K=A} and \ref{lema link y deletion dual}  \eqref{item B*-v suspension of link*}, $S^*\simeq S^*-v\simeq\Sigma^t lk(v,S)^*$  (note that $v$ is not isolated nor $lk(v,S)$ is a simplex because $v\in L$). Since $lk(v,S)$ is an $NH$-sphere of dimension $\leq d-1$ then $lk(v,S)^*$ is homotopy equivalent to a sphere by Theorem \ref{teo dong}, cases \eqref{(A)} and \eqref{(D)} or inductive hypothesis on \eqref{(B)} and \eqref{(C)}.\end{proof}

\end{document}